\newtheorem{thm}{}[section]
\newtheorem{theorem}[thm]{Theorem}
\newtheorem{lemma}[thm]{Lemma}
\theoremstyle{definition}
\newtheorem{definition}[thm]{Definition}
\theoremstyle{remark}
\newtheorem{remark}[thm]{Remark}
\numberwithin{equation}{section}
\newcommand{\FF}{\ensuremath{\mathbb{F}}}
\newcommand{\RR}{\ensuremath{\mathbb{R}}}
\newcommand{\CC}{\ensuremath{\mathbb{C}}}
\newcommand{\C}{\ensuremath{\mathbf{C}}}
\newcommand{\K}{\ensuremath{\mathbf{K}}}
\newcommand{\D}{\ensuremath{\mathbf{D}}}
\newcommand{\NN}{\ensuremath{\mathbb{N}}}
\newcommand{\xx}{\ensuremath{\bm{x}}}
\newcommand{\yy}{\ensuremath{\bm{y}}}
\newcommand{\XX}{\ensuremath{\mathbb{X}}}
\newcommand{\XB}{\ensuremath{\mathcal{X}}}
\newcommand{\YB}{\ensuremath{\mathcal{Y}}}
\newcommand{\BB}{\ensuremath{\mathcal{B}}}
\newcommand{\Ind}{\ensuremath{\mathbbm{1}}}
\newcommand{\EE}{\ensuremath{\mathbb{E}}}
\DeclareMathOperator{\sgn}{sign}
\DeclareMathOperator{\spn}{span}
\DeclareMathOperator{\supp}{supp}
\title[Polynomials and greedy bases]{Approximation by polynomials with constant coefficients and the Thresholding Greedy Algorithm}
\author[M. Berasategui]{Miguel Berasategui}
\address{Miguel Berasategui
	\\
	IMAS - UBA - CONICET - Pab I, Facultad de Ciencias Exactas y Naturales \\ Universidad de Buenos Aires \\ (1428), Buenos Aires, Argentina}
\email{mberasategui@dm.uba.ar}
\author[P.M. Bern\'a]{Pablo M. Bern\'a}
\address{Pablo M. Bern\'a\\
	Departamento de Métodos Cuantitativos, CUNEF Universidad\\ 
	Madrid\\
	28040 Spain} 
\email{pablo.berna@cunef.edu}
\author[D. Gonz\'alez]{David Gonz\'alez}
\address{David Gonz\'alez\\
	Universidad San Pablo CEU, CEU Universities \\ 
	Madrid\\
	28003, Spain}
\email{david.gonzalezmoro@usp.ceu.es}
\begin{document}
	
	\begin{abstract}
		Greedy bases are those bases where the Thresholding Greedy Algorithm (introduced by S. V. Konyagin and V. N. Temlyakov) produces the best possible approximation up to a constant. In 2017, Berná and Blasco gave a characterization of these bases using polynomials with constant coefficients. In this paper, we continue this study improving some optimization problems and extending some results to the context of quasi-Banach spaces.
		
	\end{abstract}
	\maketitle
	\section{Introduction}
	Since 1999, one of the most important algorithms in the field of non-linear approximation theory in quasi-Banach spaces is the Thresholding Greedy Algorithm $(G_m)_{m\in\mathbb N}$, where, for a given element $f$ in the space $\mathbb X$,  the algorithm selects the largest coefficients of $f$ (in modulus) respect to a given basis $(\xx_j)_{j=1}^\infty$ in the space. This algorithm was introduced by S. V. Konyagin and V. N. Temlyakov in \cite{KT} and the convergence of the TGA has been extensively studied from different perspectives by various researchers, such as F. Albiac, J. L. Ansorena, D. Kutzarova, N. J. Kalton, S. J. Dilworth and P. Wojtaszczyk (\cite{AAW2019, DKK2003,DKKT2003,W2000}), among others. Thanks to these studies, in the literature we can find the so called \textit{greedy-like bases}, that is, bases where the TGA converges in some way.

	One of the most important contributions to the theory of greedy-like bases was introduced in \cite{KT}, where the authors established the notion of a greedy basis as the one for which the TGA produces the best possible approximation up to a constant, that is, for every $f\in\mathbb N$ and $m\in\mathbb N$,
	\begin{eqnarray}\label{g1}
		\Vert f-G_m(f)\Vert \approx\inf \left\lbrace \left\Vert f-\sum_{n\in A}a_n\xx_n  \right\Vert : a_n's\in\mathbb C, \vert A\vert\leq m\right\rbrace.
	\end{eqnarray}
	In \cite{BB2017}, P. M. Berná and Ó. Blasco proved an equivalent version of greediness showing that a basis is greedy if and only if for every $f\in\mathbb N$ and $m\in\mathbb N$,
	\begin{eqnarray}\label{g2}
		\Vert f-G_m(f)\Vert \lesssim \inf \left\lbrace \left\Vert f-\sum_{j\in A}\alpha \xx_j\right\Vert : \alpha\in\mathbb C, \vert A\vert\leq m\right\rbrace,
	\end{eqnarray}
	The element $y=\sum_{j\in A}\alpha \xx_j$ is called a \textit{polynomial with constant coefficients}. 
	
	In this paper, we further advance the theory initiated by P. M. Berná and Ó. Blasco. Specifically, we extend the theory of \cite{BB2017} to the context of quasi-Banach spaces, examining whether the characterizations of greedy-like bases using polynomials of constant coefficients are applicable in this setting. Furthermore, we aim to relax the condition \eqref{g2} even further by proving that a basis is greedy if and only if \eqref{g2} is satisfied for a specific value of $\alpha$.

The structure of the paper is as follows: in Section \ref{notation}, we introduce the main definitions that we use and the main results that we prove. In Section \ref{tech}, we prove some techincal results concerning some properties about symmetry and unconditionality, where respect to the last one, we provide a new characterization in terms of elements with possitive coefficients. In Sections \ref{proof1} and \ref{proof2}, we prove Theorems~\ref{th1} and ~\ref{thag} where we characterize greediness and almost-greediness in terms of polynomials of constant coefficients with $\alpha$ fixed.

	\section{Notation and main definitions}\label{notation}
	A \textit{quasi-norm} on a vector space $\mathbb X$ over $\mathbb F=\mathbb R$ or $\mathbb C$ is a map $\Vert\cdot\Vert:\XX\rightarrow [0,+\infty)$ verifying the following three conditions:
	\begin{itemize}
		\item[N1)] $\Vert f\Vert\geq 0$ for all $f\neq 0$ and $\Vert f\Vert =0$ if and only if $f=0$,
		\item[N2)] $\Vert tf\Vert=\vert t\vert\Vert f\Vert$ for all $t\in\mathbb F$ and all $f\in\XX$,
		\item[N3)] there is $c>0$ such that for all $f,g\in\XX$, $\Vert f+g\Vert\leq c(\Vert f\Vert+\Vert g\Vert)$.
	\end{itemize}
	Then, it is well known that the collection 
	$$\left\lbrace f\in\XX : \Vert f\Vert<\frac{1}{n}\right\rbrace,\, n\in\mathbb N,$$
	is a base of neighbourhoods of zero, so the quasi-norm induces a metrizable linear topology and if $\XX$ is complete, we say that $(\XX,\Vert \cdot\Vert)$ is a \textit{quasi-Banach space}. 
	
	Given $0<p\leq 1$, we remind that a map $\Vert\cdot\Vert:\XX\rightarrow [0,+\infty)$ is a \textit{$p$-norm} if the map verifies the conditions N1), N2) and 
	
	\begin{itemize}
		\item[N4)] for all $f,g\in\XX$, $\Vert f+g\Vert^p\leq \Vert f\Vert^p+\Vert g\Vert^p$.
	\end{itemize}
	Of course, N4) implies N3) with $c=2^{1/p-1}$ and every quasi-Banach space whose quasi-norm is a $p$-norm is called a \textit{$p$-Banach space} and thanks to the Aoiki-Rolewicz's Theory (see \cite{Aoki,Rolewicz}), every quasi-Banach space becomes $p$-Banach under a suitable renorming.
	
	In the case of $p$-Banach spaces, we define the following geometrical constants
	\begin{eqnarray}\label{ap}
		A_p:= (2^p-1)^{-1/p}
	\end{eqnarray}
	and
	\begin{eqnarray}\label{bp}
		B_p:=\begin{cases} 2^{1/p}  A_p& \mbox{ if }\FF=\RR,\\  4^{1/p}  A_p& \mbox{ if }\FF=\CC.\end{cases}
	\end{eqnarray}
	
The following is a useful result from \cite{AABW2021}  that allows us to circumvent in some context the lack of convexity when $0<p<1$. 
	
	\begin{lemma}\cite[Corollaries 2.3, 2.4]{AABW2021}\label{lemmapbanach} Let $\XX$ be a $p$-Banach space for some $0<p\leq 1$. Let $(a_j)_{j\in A}$ be any collection of vectors in $\mathbb X$ with $A\subset \NN$ a finite set. Then:
		\begin{align*}
			\left\Vert\sum_{j\in A} a_jf_j\right\Vert\le&B_p\max_{j\in A }\left\vert a_j\right\vert \max_{B\subset A}\left\Vert\sum_{j\in B} f_j\right\Vert;\\
			\left\Vert\sum_{j\in A} a_jf_j\right\Vert\le&A_p\max_{j\in A }\left\vert a_j\right\vert \sup\left\lbrace \left\Vert\sum_{j\in A} \varepsilon_j f_j\right\Vert : \vert\varepsilon_j\vert=1\right\rbrace.
		\end{align*}
	\end{lemma}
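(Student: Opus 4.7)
The plan is to prove the second inequality first, then bootstrap it to obtain the first. By homogeneity (property N2) I may normalize $\max_{j\in A}\abs{a_j}=1$. Denote by $S$ the supremum and by $T$ the maximum appearing on the right-hand sides of the two inequalities, respectively.

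\textbf{Second inequality.} Since $\abs{a_j}\in[0,1]\subset[-1,1]$, a standard greedy dyadic algorithm produces signs $\delta_j^{(k)}\in\{-1,+1\}$ with $\abs{a_j}=\sum_{k\geq 1}2^{-k}\delta_j^{(k)}$. Setting $\omega_j:=a_j/\abs{a_j}$ (with the convention $\omega_j=1$ when $a_j=0$), which has modulus one regardless of the scalar field, the numbers $\eta_j^{(k)}:=\delta_j^{(k)}\omega_j$ also have modulus one, and interchanging the two finite--infinite sums yields
\begin{equation*}
\sum_{j\in A}a_j f_j \;=\; \sum_{k\geq 1}2^{-k}\sum_{j\in A}\eta_j^{(k)}f_j.
\end{equation*}
Applying property N4 across $k$ and bounding each inner sum by $S$ reduces everything to the geometric series $\sum_{k\geq 1}2^{-kp}=(2^p-1)^{-1}$, giving
\begin{equation*}
\Big\|\sum_{j\in A}a_j f_j\Big\|^p \;\leq\; S^p\sum_{k\geq 1}2^{-kp} \;=\; \frac{S^p}{2^p-1} \;=\; A_p^p\,S^p,
\end{equation*}
which is the second inequality, uniformly in $\RR$ and $\CC$.

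\textbf{First inequality, real case.} For $(\varepsilon_j)\in\{-1,+1\}^A$ I partition $A=B^+\sqcup B^-$ according to the sign of $\varepsilon_j$ and apply N4:
\begin{equation*}
\Big\|\sum_{j\in A}\varepsilon_j f_j\Big\|^p \;\leq\; \Big\|\sum_{j\in B^+}f_j\Big\|^p+\Big\|\sum_{j\in B^-}f_j\Big\|^p \;\leq\; 2T^p.
\end{equation*}
Hence $S\leq 2^{1/p}T$, and combined with the second inequality this gives the first with constant $2^{1/p}A_p$.

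\textbf{First inequality, complex case.} I decompose $a_j=\Re(a_j)+i\,\Im(a_j)$, noting $\abs{\Re(a_j)},\abs{\Im(a_j)}\leq 1$, apply the real first inequality to each of the two real sums (bounded by $2^{1/p}A_p\,T$ each), and combine by one more use of N4 to pick up an extra factor of $2^{1/p}$, producing $B_p=4^{1/p}A_p$.

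The main obstacle I anticipate is preserving the constant $A_p$ in the second inequality when passing from real to complex scalars: a naive decomposition $a_j=\Re(a_j)+i\,\Im(a_j)$ inside the dyadic step would introduce a spurious factor of $2^{1/p}$. The key is to separate modulus and phase before expanding, so that only the positive scalar $\abs{a_j}\in[0,1]$ needs a dyadic expansion while the complex phase $\omega_j$ is harmlessly absorbed into the unit modulus family $\eta_j^{(k)}$. Once this trick is in place, the remainder is a bookkeeping exercise with the geometric series and the sign-splitting of subsets.
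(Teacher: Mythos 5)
Your proof is correct, and since the paper cites this lemma from \cite{AABW2021} without reproducing the argument, there is no internal proof to compare against; your derivation is, in substance, the standard one that produces exactly the constants $A_p$ and $B_p$ of \eqref{ap}--\eqref{bp}. The key step — writing each modulus $\abs{a_j}\in[0,1]$ as $\sum_{k\ge 1}2^{-k}\delta_j^{(k)}$ with $\delta_j^{(k)}\in\{-1,+1\}$ after factoring out the phase $\omega_j$, then applying $p$-convexity termwise so that the geometric series $\sum_{k\ge 1}2^{-kp}=(2^p-1)^{-1}=A_p^p$ appears — is precisely the mechanism behind the sharp form of the second inequality, and your observation that a naive real/imaginary split \emph{before} the dyadic step would lose a factor $2^{1/p}$ is exactly the right thing to worry about. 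Your passage to the first inequality (sign-splitting $A=B^+\sqcup B^-$ in the real case, and a further real/imaginary split plus one more application of N4 in the complex case) accounts correctly for the extra factors $2^{1/p}$ and $4^{1/p}$ in $B_p$. Two small points you glossed over but which are easily supplied: (i) the series $\sum_{k\ge 1}2^{-k}g_k$ with $\|g_k\|\le S$ converges in a $p$-Banach space because its partial sums are Cauchy (the tails are controlled by $S^p\sum_{k>N}2^{-kp}\to 0$), so the bound passes to the limit; and (ii) the interchange of the finite sum over $j\in A$ with the series over $k$ is legitimate precisely because $A$ is finite. Also note that the lemma as printed in the paper has a typo — it describes $(a_j)_{j\in A}$ as ``vectors in $\XX$'' but they are clearly scalars in $\FF$, while $(f_j)_{j\in A}$ are the vectors; you implicitly (and correctly) read it that way.
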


	Now, we consider that we have a \textit{basis} $\XB=(\xx_n)_{n\in\mathbb N}$ in a quasi-Banach space $\XX$ over the field $\FF=\RR$ or $\FF=\CC$, that is, $\XB$ is a collection of elements of the space satisfying the following conditions:
	
	\begin{itemize}
		\item $[\xx_n : n\in\mathbb N]=\XX$, where $[E]$ denotes the closure of the linear span generated by the elements of the subspace $E\subset\mathbb X$;
		\item there is a unique sequence of biorthogonal functions $\XB^*=(\xx_n^*)_{n\in\mathbb N}$ such that $\xx_n^*(\xx_m)=\delta_{n,m}$;
		\item there are are $c_1,c_2>0$ such that
		$$\sup_{n\in\mathbb N}\Vert \xx_n\Vert\leq c_1<\infty,\; \sup_{n\in\mathbb N}\Vert \xx_n^*\Vert_*\leq c_2<\infty. $$ 
\end{itemize}
If additionally $\XB^*$ has the property that $\xx_n^*(f)=0$ for all $n\in\mathbb N$ entails that $f=0$, we say that $\XB^*$ is \emph{total} or, equivalently, that $\XB$ is a \textit{Markushevich basis}. In the sequel, unless specified we will not assume that $\XB$ is a Markushevich basis  - though our results will impliy this property in all cases (see \cite[Corollary 4.5]{AABW2021}).

		Given a basis $\XB$, an element $f\in\XX$ and $m\in\mathbb N$, we define a \textit{greedy ordering} as a map $\pi: \mathbb N\rightarrow \mathbb N$ such that $\text{supp}(f)\subset\pi(\mathbb N)$ (where the support of $f$ is the collection of $n\in \mathbb N$ where $\xx_n^*(f)\neq 0$) and such that if $j<k$, then either $\vert \xx_{\pi(j)}^*(f)\vert> \vert \xx_{\pi(k)}^*(f)\vert$ or $\vert \xx_{\pi(j)}^*(f)\vert=\vert \xx_{\pi(k)}^*(f)\vert$ and $\pi(j)<\pi(k)$. Using this map, we define a \textit{greedy sum of $f$ of order $m$} like the sum
	$$G_m[\mathcal X,\mathbb X](f)=G_m(f):=\sum_{i=1}^m \xx_{\pi(i)}^*(f)\xx_{\pi(i)}.$$
	The Thresholding Greedy Algorithm (TGA) is the collection $(G_m)_{m\in\mathbb N}$. \\
Recall that the projection operator is defined as follows: given a finite set $A\subset\mathbb N$ and $f\in\mathbb X$, 
	$$P_A[\mathcal X,\mathbb X](f)=P_A(f):=\sum_{n\in A}\xx_n^*(f)\xx_n.$$
	Then,  a greedy sum is a projection over a \textit{greedy set} $A_m(f)$, where $A_m(f)$ is a set verifying the following two conditions: the cardinality is exactly $m$ and
	$$\min_{n\in A_m(f)}\vert\xx_n^*(f)\vert\geq\max_{n\not\in A_m(f)}\vert \xx_n^*(f)\vert.$$
	
	Of course, unconditionality guarantees the convergence of the TGA, where we say that a basis is $\K$-\textit{unconditional} with $\K>0$ if
	$$\Vert P_A(f)\Vert\leq \K\Vert f\Vert,\; \forall \vert A\vert<\infty, \forall f\in\mathbb X.$$
	However, in the literature, there are examples of conditional bases where the TGA converges. For that reason, we have the following definition to study the convergence of the algorithm. 
	\begin{definition}
		We say that a basis $\XB$ in a quasi-Banach space $\XX$ is \textit{quasi-greedy} if there is $\C>0$ such that for all $f\in\mathbb X$, 
\begin{eqnarray}\label{defqg}
	\Vert f-P_A(f)\Vert\leq \C\Vert f\Vert,
\end{eqnarray}
		whenever $A$ is a finite greedy set of $f$. The smallest constant verifying \eqref{defqg} is denoted by $\C_q$ and we say that $\mathcal X$ is $\C_q$-quasi-greedy.
	\end{definition}
	P. Wojtaszczyk proved in \cite{W2000} (also the proof could be found in \cite{AABW2021}) that a basis is quasi-greedy if and only if the TGA converges, that is,
	$$\lim_{m\rightarrow+\infty}\left\Vert f-\sum_{n=1}^m \xx_{\pi(n)}^*(f)\xx_{\pi(n)}\right\Vert=0.$$
	
	An stronger notion than quasi-greediness is greediness, that is, the case when the TGA produces the best possible approximation up to a constant. To define it precisely, we need the \textit{best $m$th error in the approximation of} $f\in\mathbb X$:
	$$\sigma_m[\mathcal X,\XX](f)=\sigma_m(f):=\inf\left\lbrace\left\Vert f-\sum_{j\in A}a_j\xx_j\right\Vert : a_j\in\mathbb F\,\forall j\in A, \vert A\vert\leq m\right\rbrace.$$
	\begin{definition}
		We say that a basis $\XB$ in a quasi-Banach space $\XX$ is \textit{greedy} if there is $\C>0$ such that for all $f\in\mathbb X$ and $m\in\mathbb N$,
		\begin{eqnarray}\label{defg}
			\Vert f-P_A(f)\Vert\leq \C\sigma_m(f),
		\end{eqnarray}
		whenever $A$ is a finite greedy set of $f$ of cardinality $m$. The smallest constant verifying \eqref{defg} is denoted by $\C_g$ and we say that $\XB$ is $\C_g$-greedy.
	\end{definition}
	
	In \cite{KT}, the authors characterized greediness in terms of unconditionality and democracy. To formalize the last notion, we need the following notation:
	\begin{align*}
		&\Ind_{\varepsilon, A}[\mathcal X,\mathbb X]=\Ind_{\varepsilon,A}:=\sum_{j\in A}\varepsilon_j \xx_j;&&\varepsilon=(\varepsilon_j)_{j\in A}\in\EE^{A};&&&\EE:=\{\lambda\in \FF\colon \left\vert \lambda\right\vert=1\}. 
	\end{align*}
	In general, $\varepsilon\in\EE^A$ is called a \textit{sign} and, for $f\in\XX$,
$$\varepsilon(f)\equiv\lbrace\sgn(\xx_n^*(f))\rbrace, n\in\mathbb N.$$	
	Thus, a basis $\XB$ is $\D$-\textit{democratic} with $\D>0$ if
	$$\left\Vert \Ind_A\right\Vert\leq \D\left\Vert\Ind_B\right\Vert,$$
		for any pair of finite sets with $\vert A\vert\leq\vert B\vert$, where $\Ind_A=\Ind_{\varepsilon,A}$ with $\varepsilon\equiv 1$. A similar characterization could be obtained by substituting superdemocracy for democracy, where $\XB$ is $\D_{s}$-\textit{superdemocratic} with $\D_{s}>0$ if
	$$\left\Vert \Ind_{\varepsilon, A}\right\Vert\leq \D_{s}\left\Vert\Ind_{\eta, B}\right\Vert,$$
for any pair of finite sets $A$ and $B$ with $\vert A\vert\leq\vert B\vert$, $\varepsilon\in \EE^{A}$ and $\eta\in\EE^{B}$.
	
	\begin{theorem}[{\cite{AABW2021,KT}}]\label{kt}
		A basis $\XB$ in a quasi-Banach space $\XX$ is greedy if and only if $\XB$ is unconditional and democratic or superdemocratic. 
	\end{theorem}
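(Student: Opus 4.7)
The plan is to adapt the Konyagin--Temlyakov strategy to the quasi-Banach setting, reducing by Aoki--Rolewicz to a $p$-Banach norm and invoking Lemma~\ref{lemmapbanach} wherever the classical argument uses convexity.

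For the forward direction ``greedy $\Rightarrow$ unconditional and superdemocratic'' I proceed in three stages. First, $\sigma_m(f)\leq\|f\|$ (take the zero competitor), so greediness at once forces quasi-greediness with constant $\C_g$. Second, for equal-cardinality superdemocracy I fix disjoint $A,B$ with $|A|=|B|$, signs $\varepsilon\in\EE^A$, $\eta\in\EE^B$, and set $f_\epsilon := (1+\epsilon)\Ind_{\varepsilon,A}+\Ind_{\eta,B}$ for small $\epsilon>0$: then $A$ is the greedy set of $f_\epsilon$ of size $|A|$, and the competitor $\sum_{j\in B}\eta_j\xx_j$ in $\sigma_{|A|}$ yields $\|\Ind_{\eta,B}\|\leq\C_g(1+\epsilon)\|\Ind_{\varepsilon,A}\|$; sending $\epsilon\to 0$ and swapping the roles of the two pairs handles this case. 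For $|A|\leq|B|$ I pick $B'\subset B$ of size $|A|$ and chain through the monotonicity estimate $\|\Ind_{\eta,B'}\|\leq\C_q\|\Ind_{\eta,B}\|$, itself obtained by applying quasi-greediness to $\Ind_{\eta,B}+\delta\Ind_{\eta,B\setminus B'}$ and sending $\delta\to 0$. Third, for unconditionality, given finite $A\subset\supp(f)$ with $M=\max_j|\xx_j^*(f)|$ and $C$ disjoint from $\supp(f)$ of size $|\supp(f)\setminus A|$, the auxiliary element $\tilde f := P_A(f) + 2M\Ind_C$ has $C$ as its greedy set of size $|C|$; the greedy inequality applied to $\tilde f$, combined with Lemma~\ref{lemmapbanach} (to strip the auxiliary $\Ind_C$) and the already-proved superdemocracy, yields $\|P_A(f)\|\leq\K\|f\|$.

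For the converse ``unconditional and democratic $\Rightarrow$ greedy'', given $f$, a greedy set $A$ of size $m$, and any competitor $g=\sum_{j\in B}b_j\xx_j$ with $|B|\leq m$, I use the decomposition
$$f-P_A(f) = P_{A^c}(f-g) + P_{B\setminus A}(g),$$
the first term of which is at most $\K\|f-g\|$ by unconditionality. To bound the second, write $P_{B\setminus A}(g) = P_{B\setminus A}(f) - P_{B\setminus A}(f-g)$ and control $\|P_{B\setminus A}(f)\|$ as follows. Set $c := \min_{k\in A}|\xx_k^*(f)|$; since $A$ is greedy, $\max_{j\in B\setminus A}|\xx_j^*(f)|\leq c$, so Lemma~\ref{lemmapbanach} yields $\|P_{B\setminus A}(f)\|\leq A_p\, c\, \D_s\,\|\Ind_{\varepsilon(f),A\setminus B}\|$ (using $|B\setminus A|\leq|A\setminus B|$ and superdemocracy to pass from the signed indicator on $B\setminus A$ to one on $A\setminus B$). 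Another application of Lemma~\ref{lemmapbanach}, together with unconditionality applied to $f-g$ on subsets $E\subset A\setminus B\subset B^c$ (where $P_E(f)=P_E(f-g)$), gives $c\,\|\Ind_{\varepsilon(f),A\setminus B}\|\leq B_p\,\K\|f-g\|$; the factors of $c$ cancel and we obtain $\|P_{B\setminus A}(f)\|\leq A_pB_p\D_s\K\|f-g\|$, closing the chain.

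The main obstacle is the unconditionality step of Direction~1: extracting $\|P_A(f)\|\leq\K\|f\|$ from the greedy inequality applied to the perturbed $\tilde f$ requires separating the auxiliary indicator $\Ind_C$ from the term $P_A(f)$ without recourse to convexity, and every such separation via Lemma~\ref{lemmapbanach} contributes an $A_p$ or $B_p$ factor; the careful bookkeeping needed to ensure these compose into a finite $\K$ independent of $\supp(f)$ is precisely the technical engine of \cite{AABW2021}.
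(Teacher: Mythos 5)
The paper states Theorem~\ref{kt} only as a citation to \cite{AABW2021,KT} and gives no proof, so there is no in-paper argument to compare against; the assessment below is of your proposal on its own terms. Your converse direction (unconditional and superdemocratic imply greedy) is essentially the known argument and your $p$-Banach bookkeeping via Lemma~\ref{lemmapbanach} is sound: the decomposition $f-P_A(f)=P_{A^c}(f-g)+P_{B\setminus A}(g)$, the truncation step turning $c\,\|\Ind_{\varepsilon(f),A\setminus B}\|$ into $B_p\K\|f-g\|$, and the superdemocratic passage from $B\setminus A$ to $A\setminus B$ all compose correctly. The superdemocracy step of the forward direction is also fine.

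The unconditionality step of the forward direction, which you yourself flag as the obstacle, has a genuine gap, and it is not a matter of tracking $A_p$, $B_p$ factors. With $\tilde f := P_A(f)+2M\Ind_C$ and $C$ disjoint from $\supp(f)$ of size $|\supp(f)\setminus A|$, greediness gives $\|P_A(f)\|\le\C_g\sigma_{|C|}(\tilde f)$, but you now need a competitor of support at most $|C|$ making $\tilde f$ close to something of norm $\lesssim\|f\|$. The only competitor of that size available in $\tilde f$ is $2M\Ind_C$ itself, which yields the tautology $\sigma_{|C|}(\tilde f)\le\|P_A(f)\|$; any competitor restoring the missing tail $P_{\supp(f)\setminus A}(f)$ needs support of size $2|C|$, which is disallowed, and democracy does not repair this because the relevant coefficient scale $M$ is the maximum, not the minimum, of $|\xx_j^*(f)|$. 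The standard fix is to place the auxiliary indicator on $A$ itself rather than on a fresh set: with $t>M$ and $\varepsilon:=\varepsilon(f)$, set $h:=(f-P_A(f))+t\Ind_{\varepsilon,A}$. Then $A$ is the unique greedy set of $h$ of cardinality $|A|$, $h-P_A(h)=f-P_A(f)$, and the competitor $y:=t\Ind_{\varepsilon,A}-P_A(f)$ is supported on $A$ and satisfies $h-y=f$ exactly, so $\sigma_{|A|}(h)\le\|f\|$ and $\|f-P_A(f)\|\le\C_g\|f\|$ with no appeal to Lemma~\ref{lemmapbanach} or democracy at all. Applying this with $\supp(f)\setminus A$ in place of $A$ and extending by density (Lemma~\ref{l:appr_fun_suppgeneral}) gives $\|P_A(f)\|\le\C_g\|f\|$ for all finite $A$.
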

	
	In \cite{BB2017}, the authors introduced a new way to study greedy bases using polynomials of constant coefficients, where a polynomial of this type is basically an element of $\XX$ of the form $\alpha \Ind_{\varepsilon,A}=\sum_{j\in A}\alpha \varepsilon_j\xx_j$ where $\alpha\in\mathbb F$ and $\varepsilon\in\EE^A$.

	\begin{theorem}[\cite{BB2017}]
		Let $\XB$ be a basis in a Banach space $\XX$. The basis is greedy if and only if there is $\C>0$ such that for all $f\in\XX$,
		\begin{eqnarray}\label{op1}
			\Vert f-P_A(f)\Vert \leq \C\inf\lbrace\Vert f-\alpha\Ind_{\varepsilon, B}\Vert : \alpha\in\mathbb F, \vert B\vert=\vert A\vert, \varepsilon\in\EE^{B}\rbrace,
		\end{eqnarray}
		whenever $A$ is a finite greedy set of $f$.
	\end{theorem}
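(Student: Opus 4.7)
The plan is as follows. The forward direction $(\Rightarrow)$ is immediate: if $\XB$ is $\C_g$-greedy and $A$ is a greedy set of $f$ of cardinality $m$, then every polynomial $\alpha\Ind_{\varepsilon,B}$ with $|B|=m$ is admissible in the infimum defining $\sigma_m(f)$, and so
$$\Vert f-P_A(f)\Vert\le\C_g\sigma_m(f)\le\C_g\Vert f-\alpha\Ind_{\varepsilon,B}\Vert;$$
passing to the infimum over $\alpha,\varepsilon,B$ gives \eqref{op1} with constant $\C_g$.

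For the converse $(\Leftarrow)$, I would extract three hallmark properties of greedy bases from \eqref{op1} by suitable choices of $(\alpha,\varepsilon,B)$ and then combine them via known equivalences. The easy consequences are quasi-greediness and superdemocracy. Taking $\alpha=0$ immediately gives $\Vert f-P_A(f)\Vert\le\C\Vert f\Vert$ for every greedy set $A$; in particular, any subset of the support of a constant-coefficient vector is greedy, so $\Vert\Ind_{\varepsilon|_{B'},B'}\Vert\le\C\Vert\Ind_{\varepsilon,B}\Vert$ whenever $B'\subset B$. For superdemocracy, given disjoint finite sets $A,B$ with $|A|=|B|=m$ and signs $\varepsilon\in\EE^A$, $\eta\in\EE^B$, I would apply \eqref{op1} to $f=2\Ind_{\varepsilon,A}+\Ind_{\eta,B}$, whose unique greedy set of size $m$ is $A$, so that $\Vert f-P_A(f)\Vert=\Vert\Ind_{\eta,B}\Vert$. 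Picking the competitor $\Ind_{\eta,B}$ (i.e., $\alpha=1$, set $B$, sign $\eta$) yields $\Vert\Ind_{\eta,B}\Vert\le 2\C\Vert\Ind_{\varepsilon,A}\Vert$, which extends by an auxiliary disjoint set to arbitrary sets of equal cardinality and, via the quasi-greedy bound on constant-coefficient vectors, to $|A|\le|B|$.

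The main obstacle is the symmetry for largest coefficients (SLC), and here the freedom to choose $\alpha\neq 0$ in \eqref{op1} is decisive. My plan is to apply \eqref{op1} to $h=f+\Ind_{\varepsilon,A}+\delta\Ind_{\eta,B}$ where $f$ is finitely supported with $\max_{n\in\supp(f)}|\xx_n^*(f)|<\delta<1$ and $A,B$ are disjoint from $\supp(f)$ and from each other with $|A|=|B|=m$. Then the set $A$ is a greedy set of $h$ of size $m$, $h-P_A(h)=f+\delta\Ind_{\eta,B}$, and choosing the competing polynomial $\delta\Ind_{\eta,B}$ (that is, $\alpha=\delta$, set $B$, sign $\eta$) in \eqref{op1} gives
$$\Vert f+\delta\Ind_{\eta,B}\Vert\le\C\Vert f+\Ind_{\varepsilon,A}\Vert.$$
Letting $\delta\to 1^-$ and swapping the roles of $(A,\varepsilon)$ and $(B,\eta)$ produces the SLC estimate that $\Vert f+\Ind_{\varepsilon,A}\Vert$ and $\Vert f+\Ind_{\eta,B}\Vert$ are comparable up to a constant depending only on $\C$.

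Quasi-greediness together with superdemocracy yields almost-greediness, and almost-greediness combined with SLC yields greediness by the standard factorization; equivalently, the three properties combine to give full unconditionality, after which Theorem~\ref{kt} closes the argument and provides an explicit greedy constant depending on $\C$. The bookkeeping in the final step is routine, and the essential new input is the SLC derivation above, which is precisely what goes beyond the content of $\alpha=0$ in \eqref{op1}.
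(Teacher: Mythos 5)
Your forward direction is correct, and so are the individual extractions from \eqref{op1}: quasi-greediness via $\alpha=0$, superdemocracy via $f=2\Ind_{\varepsilon,A}+\Ind_{\eta,B}$, and the SLC inequality via $h=f+\Ind_{\varepsilon,A}+\delta\Ind_{\eta,B}$ with $\delta\to 1^-$ (essentially the perturbation trick used in the proof of Lemma~\ref{lemmagreedycar2}, approaching $1$ from below instead of from above). The gap is in the synthesis. The assertion that ``almost-greediness combined with SLC yields greediness by the standard factorization'' is false, and the three properties you extract do not ``combine to give full unconditionality.'' In fact, almost-greedy bases are \emph{automatically} SLC: if $\|f\|_{\infty}\le 1$ and $A$, $B$ are disjoint from each other and from $\supp f$ with $|A|\le|B|$, then $B$ is a greedy set of $g:=f+\Ind_{\eta,B}$, so $\|f\|=\|g-P_B(g)\|\le \C_q\|g\|$, while superdemocracy and truncation quasi-greediness give $\|\Ind_{\varepsilon,A}\|\le \D_s\|\Ind_{\eta,B}\|\le \D_s\Gamma_t\|g\|$; hence $\|f+\Ind_{\varepsilon,A}\|\le(\C_q+\D_s\Gamma_t)\|f+\Ind_{\eta,B}\|$. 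Since there are almost-greedy bases that are not greedy, ``quasi-greedy $+$ superdemocratic $+$ SLC'' collapses to almost-greediness and never leaves that class. The factorizations that actually yield greediness are ``unconditional $+$ democratic'' (Theorem~\ref{kt}) and ``unconditional $+$ SLC'' (Remark~\ref{rem1}); you have not established unconditionality.

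Unconditionality \emph{is} a direct consequence of \eqref{op1}, and this---not the SLC step---is where the freedom to choose $\alpha\neq 0$ is decisive; it is precisely the first half of Lemma~\ref{lemmagreedycar2}. Given disjointly supported $f,g\in\spn\{\XB\}$, set $A:=\supp(g)$, pick $n_0\notin A\cup\supp(f)$, let $A_0:=A\cup\{n_0\}$, define $\varepsilon_n:=\sgn(\xx_n^*(g))$ for $n\in A$ and $\varepsilon_{n_0}:=1$, and take $t_0:=1+\max_n\bigl(|\xx_n^*(f)|+|\xx_n^*(g)|\bigr)$. Then $A_0$ is the unique greedy set of $h:=f+g+t_0\Ind_{\varepsilon,A_0}$ of cardinality $|A_0|$, and $h-P_{A_0}(h)=f$. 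Choosing the competitor with $\alpha=t_0$, set $A_0$, and sign $\varepsilon$ gives $h-t_0\Ind_{\varepsilon,A_0}=f+g$, so \eqref{op1} yields $\|f\|\le\C\|f+g\|$, i.e.\ $\XB$ is $\C$-suppression unconditional. Once you have this, either of your other two deductions (democracy or SLC) closes the argument via Theorem~\ref{kt} or the bound $\C_g\le\K\Delta$.
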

	
	The purpose of this paper is to relax the optimization problem given in \eqref{op1} trying to to fix the coefficient $\alpha$. That is, in the error
	$$\inf\lbrace\Vert f-\alpha\Ind_{\varepsilon, B}\Vert : \alpha\in\mathbb F, \vert B\vert=\vert A\vert, \varepsilon\in\EE^{B}\rbrace,$$
	we are taking the infimum over the sets, over the signs $\varepsilon$ and over $\alpha$. Here, we show that we can fix the value of $\alpha$  and take only the infimum over the sets since we can also fix the sign. For that, we introduce the following two new errors:
	
	\begin{itemize}
		\item For each $m\in \NN$ and each $f\in \XX$, let
		\begin{align*}
		\rho_m[\mathcal X,\mathbb X]=\rho_m(f):=\inf\left\lbrace \left\Vert f-\alpha\Ind_{\varepsilon, A }\right\Vert\colon \alpha=\min_{n\in A_m(f) }|\xx_n^*(f)|, \quad |A|=m, \varepsilon\in \EE^A\right\rbrace. 
		\end{align*}
		\item For each $m\in \NN$ and each $f\in \XX$, let
		\begin{align*}
			\varrho_m[\mathcal X,\mathbb X]=\varrho_m(f):=\inf\left\lbrace \left\Vert f-\alpha\Ind_{ A }\right\Vert\colon \alpha=\min_{n\in A_m(f) }|\xx_n^*(f)|, \quad |A|=m \right\rbrace. 
		\end{align*}
	\end{itemize}
	
	\begin{definition}
		We say that a basis $\XB$ is \textit{relaxed greedy for polynomials with constant coefficients} (RGPCC for short) if there is $\C>0$ such that for all $m\in\mathbb N$ and $f\in\XX$,
		\begin{eqnarray}\label{defgp}
			\Vert f-P_A(f)\Vert\leq \C\rho_m(f),\; \forall A\, \text{ greedy set of cardinality}\, m.
		\end{eqnarray}
		The smallest constant verifying \eqref{defgp} is denoted by $\C_{pg}$ and we say that $\XB$ is $\C_{pg}$-RGPCC.
	\end{definition}
	
	\begin{definition}
		We say that a basis $\XB$ is \textit{unsigned-relaxed greedy for polynomials with constant coefficients} (URGPCC for short) if there is $\C>0$ such that for all $m\in\mathbb N$ and $f\in\XX$,
		\begin{eqnarray}\label{defgp2}
			\Vert f-P_A(f)\Vert\leq \C\varrho_m(f),\; \forall A\, \text{ greedy set of cardinality}\, m.
		\end{eqnarray}
		The smallest constant verifying \eqref{defgp2} is denoted by $\C_{pgu}$ and we say that $\XB$ is $\C_{pgu}$-URGPCC.
	\end{definition}
	
	Here we will show the following result:
	\begin{theorem}\label{th1}
		Let $\XB$ be a basis in a quasi-Banach space $\XX$. The following are equivalent:
		\begin{itemize}
			\item[i)] $\XB$ is greedy.
			\item[ii)] $\XB$ is RGPCC.
			\item[iii)] $\XB$ is URGPCC.
		\end{itemize}
	Quantitatively, if $\XX$ is a $p$-Banach space with $0<p\leq 1$, then
	$$\C_{pgu}\leq \C_{pg}\leq \C_g.$$
	Moreover, if $\mathbb F=\mathbb R$, then
	\begin{eqnarray}\label{th1-1}
		\C_g\leq \min\lbrace A_p^2 \C_{pg}^2, \C_{pgu}^{p}(1+A_p^p\C_{pgu}^{2p}\min\lbrace B_p^p, A_p^p\C_{pgu}^{2p}\})^{1/p}\rbrace.
	\end{eqnarray}
		If $\mathbb F=\mathbb C$, there is a constant $\K=\K(\C_{pgu},p)$ such that
	\begin{eqnarray}\label{th1-2}
		\C_g\leq \min\lbrace A_p^2 \C_{pg}^2, \K(1+A_p^p\C_{pgu}^{2p}\min\lbrace B_p^p, A_p^p\K^{p}\})^{1/p}\rbrace.
	\end{eqnarray}
	\end{theorem}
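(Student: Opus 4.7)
The plan is to establish first the easy chain \textit{(i)} $\Rightarrow$ \textit{(ii)} $\Rightarrow$ \textit{(iii)} with $\C_{pgu}\leq \C_{pg}\leq \C_g$, and then the harder converse together with the quantitative estimates \eqref{th1-1}--\eqref{th1-2}. The easy direction is immediate from the pointwise inequalities $\sigma_m(f)\leq \rho_m(f)\leq \varrho_m(f)$, each following because the infimum is taken over a strictly smaller family of competing elements.

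For the hard direction I would invoke Theorem \ref{kt}: it suffices to show that RGPCC (resp. URGPCC) implies superdemocracy and unconditionality with controlled constants. For superdemocracy, given disjoint sets $A,B$ with $|A|\leq |B|$ and signs $\varepsilon\in\EE^A$, $\eta\in\EE^B$, I would test the hypothesis on $f := \Ind_{\varepsilon,A} + (1+\delta)\Ind_{\eta,B}$ with small $\delta>0$. Then $B$ is the greedy set of order $|B|$, $\alpha = 1+\delta$, and $f-P_B(f)=\Ind_{\varepsilon,A}$. Choosing the competing polynomial $\alpha\Ind_{\tilde\varepsilon,C}$ with $C=A\cup B'$, $B'\subset B$, $|B'|=|B|-|A|$, and (for RGPCC) taking $\tilde\varepsilon$ to agree with $\varepsilon$ on $A$ and with $\eta$ on $B'$, yields in the limit $\delta\to 0^+$ an estimate of the form $\|\Ind_{\varepsilon,A}\|\leq \C_{pg}\|\Ind_{\eta,B\setminus B'}\|$, whence superdemocracy with constant essentially $\C_{pg}$. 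In the URGPCC case the competing sign is forced to be trivial, so an additional application of Lemma \ref{lemmapbanach} (costing an $A_p$ factor and an extra power of $\C_{pgu}$) is required to restore sign freedom.

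For unconditionality, given $f=\sum_{n\in E} a_n \xx_n$ with finite support $E$ and $A\subset E$, let $A_0:=E\setminus A$, and apply the hypothesis to $g:=f+M\Ind_{\xi,A_0}$ for a sign pattern $\xi$ on $A_0$ and $M\gg \max_n |a_n|$, so that $A_0$ becomes a greedy set of $g$ of order $|A_0|$ with $\alpha\approx M$ and $g-P_{A_0}(g)=P_A(f)$. The competing polynomial $\alpha\Ind_{\sigma,A_0}$, chosen with $\sigma_n$ aligning with the phase of $a_n+M\xi_n$, leaves as $M\to\infty$ only a bounded residual on $A_0$ controlled via Lemma \ref{lemmapbanach} and the superdemocracy established above. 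This produces $\|P_A(f)\|\lesssim A_p \C_{pg}\|f\|$, and combined with Theorem \ref{kt} gives the bound $\C_g\leq A_p^2\C_{pg}^2$.

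The main obstacle is the complex URGPCC case of \eqref{th1-2}. Because $\alpha\Ind_C$ carries no phase freedom, one cannot directly match complex coefficients and the previous argument loses sharpness. In the real case $\FF=\RR$ the sign structure $\pm 1$ allows a direct splitting of supports into positive and negative parts together with two applications of URGPCC to give the explicit bound \eqref{th1-1}. In the complex case one must average the estimates against rotations on the unit circle; the resulting $p$-dependence, coming from the $p$-Banach convexity inequality after integration, forces the implicit constant $\K=\K(\C_{pgu},p)$ in \eqref{th1-2}. Threading these bounds through Theorem \ref{kt} while tracking the nested invocations of Lemma \ref{lemmapbanach} is the main calculational step that would need to be executed with care.
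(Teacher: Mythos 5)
Your easy chain i) $\Rightarrow$ ii) $\Rightarrow$ iii) is correct and is exactly what the paper does, and the broad strategy of reducing the converse to unconditionality plus a democracy-type property is also aligned. One structural difference: for RGPCC the paper proves symmetry for largest coefficients (SLC), not merely superdemocracy, because the bound $\C_g\le A_p^2\Delta\K$ from Remark~\ref{rem1} is what yields the stated constant $A_p^2\C_{pg}^2$; your route via superdemocracy and Theorem~\ref{kt} would land on the weaker democracy-type bound instead. For URGPCC the paper proves plain democracy and obtains unconditionality through Lemma~\ref{lemmacomplex+realuncond}, whose complex case uses a finite $\delta$-net of the unit circle rather than a continuous rotation-average, but that is a matter of taste. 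Your superdemocracy test element $\Ind_{\varepsilon,A}+(1+\delta)\Ind_{\eta,B}$ and the cancellation as $\delta\to 0^+$ are fine and match Lemmas~\ref{lemmagreedycar2} and~\ref{lemmagreedy3}.

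The genuine gap is in the unconditionality step. Taking $A_0:=E\setminus A$ and $g:=f+M\Ind_{\xi,A_0}$, the prescribed coefficient is $\alpha=\min_{n\in A_0}|a_n+M\xi_n|$, and with $\sigma_n$ the phase of $a_n+M\xi_n$ one has
\begin{align*}
g-\alpha\Ind_{\sigma,A_0}=P_A(f)+\sum_{n\in A_0}\bigl(|a_n+M\xi_n|-\alpha\bigr)\sigma_n\,\xx_n.
\end{align*}
The coefficients $|a_n+M\xi_n|-\alpha$ do \emph{not} vanish as $M\to\infty$: they converge to $\mathrm{Re}(\overline{\xi_n}a_n)-\min_m\mathrm{Re}(\overline{\xi_m}a_m)$. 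So the competing polynomial never cancels the injected spike exactly, and $\|g-\alpha\Ind_{\sigma,A_0}\|$ exceeds $\|f\|$ by a residual of order $\max_n|a_n|\cdot\sup_{\varepsilon}\|\Ind_{\varepsilon,A_0}\|$. Controlling that quantity by $\|f\|$ is precisely the truncation estimate \eqref{tc}, which presupposes quasi-greediness; it does not follow from Lemma~\ref{lemmapbanach} plus superdemocracy as you suggest, so the chain terminating in $\|P_A(f)\|\lesssim A_p\C_{pg}\|f\|$ is unjustified.

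The missing idea is the auxiliary-index trick of Lemma~\ref{lemmagreedycar2}: inject the spike on $A_0:=\supp(g)\cup\{n_0\}$ with one fresh index $n_0$ outside $\supp(f)\cup\supp(g)$, choosing $\varepsilon_n=\sgn(\xx_n^*(g))$ on $\supp(g)$ and $\varepsilon_{n_0}=1$. Then for $n\in\supp(g)$ one has $|\xx_n^*(h)|=|\xx_n^*(g)|+t_0>t_0$, while $|\xx_{n_0}^*(h)|=t_0$ exactly, so the prescribed $\alpha$ equals $t_0$ and the competitor $t_0\Ind_{\varepsilon,A_0}$ cancels the injection with zero residual, giving cleanly $\|f\|\le\C_{pg}\|f+g\|$. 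Without that fresh index the argument does not close.
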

	
	Additionally, we prove similar results for almost-greedy bases.
	\begin{definition}
		We say that a basis $\XB$ in a quasi-Banach space $\XX$ is \textit{almost-greedy} if there is $\C>0$ such that for all $f\in\mathbb X$,
		\begin{eqnarray}\label{defag}
			\Vert f-P_A(f)\Vert\leq \C\inf\lbrace\Vert f-P_B(f)\Vert : \vert B\vert\leq\vert A\vert\rbrace,
		\end{eqnarray}
		whenever $A$ is a finite greedy set of $f$. The least constant verifying \eqref{defag} is denoted by $\C_{ag}$ and we say that $\XB$ is $\C_{ag}$-almost-greedy.
	\end{definition}
	
	Related to these bases and following the idea about polynomials of constant coefficients, S. J. Dilworth and D. Khurana proved in \cite{DKhu2018} that a basis in a Banach space is almost-greedy if and only if there is $\C>0$ such that
	\begin{align}
		\|f-P_A(f)\|\le& \C\|f-a\Ind_{B}\|\label{toimprove1}
	\end{align}
	for all $f\in \XX$, $A$ a greedy set of $f$, $B\subset \NN$, $|B|\le |A|$, all $a\in \FF$ and $A< B$ or $B<A$, where $A<B$ means that $\max_{j\in A} j <\min_{i\in B} i$.\\
Our next theorem improves the above result.

	\begin{theorem}\label{thag}Let $\XB$ be a basis for a $p$-Banach space $\XX$. The following are equivalent:
		\begin{enumerate}[\rm i)]
			\item \label{ag} $\XB$ is almost greedy.
			\item\label{two} $\XB$ is quasi-greedy and superdemocratic.
		 \item \label{dis} There is $\C>0$ such that
			\begin{align*}
				\|f-P_A(f)\|\le& \C\|f-a\Ind_{\varepsilon, B}\|
			\end{align*}
			for all $f\in \XX$, $A$ a greedy set of $f$, $B\subset \NN$, $|B|\le |A|$ and $A\cap B=\emptyset$, $\varepsilon\in \EE^B$, and all $a\in \FF$.

			\item\label{end} There is $\C>0$ such that
			\begin{align*}
				\|f-P_A(f)\|\le& \C\|f-\min_{n\in A}\vert\xx_n^*(f)\vert \Ind_{\varepsilon,B}\|
			\end{align*}
			for all $f\in \XX$, $A$ a greedy set of $f$, $B\subset \NN$, $|B|\le |A|$ and $A< B$ or $B<A$ and all $\varepsilon\in \EE^{ B}$. 
			\item\label{end2} There is $\C>0$ such that
			\begin{align*}
				\|f-P_A(f)\|\le& \C\|f-\min_{n\in A}\vert\xx_n^*(f)\vert \Ind_{B}\|
			\end{align*}
			for all $f\in \XX$, $A$ a greedy set of $f$, $B\subset \NN$, $|B|= |A|$ and $A< B$ or $B<A$. 
		\end{enumerate}
	\end{theorem}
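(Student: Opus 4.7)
My approach is to establish the cycle
\begin{equation*}
(\ref{two}) \Longrightarrow (\ref{dis}) \Longrightarrow (\ref{end}) \Longrightarrow (\ref{end2}) \Longrightarrow (\ref{two}),
\end{equation*}
treating (\ref{ag}) $\Leftrightarrow$ (\ref{two}) as the known characterization of almost-greediness in $p$-Banach spaces from \cite{AABW2021}. The specializations (\ref{dis}) $\Rightarrow$ (\ref{end}) $\Rightarrow$ (\ref{end2}) are immediate: (\ref{end}) sets $a=\min_{n\in A}|\xx_n^*(f)|$ and strengthens $A\cap B=\emptyset$ to $A<B$ or $B<A$; (\ref{end2}) further takes $\varepsilon\equiv 1$ and $|B|=|A|$.

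For (\ref{two}) $\Rightarrow$ (\ref{dis}) I would adapt the Dilworth--Khurana argument to the $p$-Banach setting. Given $f$, $A$ a greedy set, $B$ with $A\cap B=\emptyset$ and $|B|\le|A|$, $a\in\FF$ and $\varepsilon\in\EE^B$, set $g=f-a\Ind_{\varepsilon,B}$. Since $A\cap B=\emptyset$, the set $A$ remains a greedy set of $g$ (of a suitable cardinality), so quasi-greediness yields $\|g-P_A(g)\|\lesssim\|g\|$. Expanding $g-P_A(g)=(f-P_A(f))-a\Ind_{\varepsilon,B}$, the $p$-norm gives
\begin{equation*}
\|f-P_A(f)\|^p \le \|g-P_A(g)\|^p + |a|^p\|\Ind_{\varepsilon,B}\|^p,
\end{equation*}
and superdemocracy combined with Lemma \ref{lemmapbanach} (used to dominate $|a|\|\Ind_{\varepsilon,B}\|$ by a multiple of the quasi-greedy projection of $g$ onto $B$) absorbs the second summand into $\|g\|=\|f-a\Ind_{\varepsilon,B}\|$.

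The main obstacle is (\ref{end2}) $\Rightarrow$ (\ref{two}), where I would derive democracy and quasi-greediness separately. For equal-size democracy with $A_1<A_2$ and $|A_1|=|A_2|=m$, I would probe (\ref{end2}) on $f=\Ind_{A_1}+(1+\delta)\Ind_{A_2}$ for small $\delta>0$ (so that $A=A_2$ is greedy and $\alpha=1+\delta$) with the auxiliary set $B=A_1$ (valid since $B<A$). The left side equals $\|\Ind_{A_1}\|$ and the $p$-norm bounds the right side by $\C^p\bigl(\delta^p\|\Ind_{A_1}\|^p+(1+\delta)^p\|\Ind_{A_2}\|^p\bigr)$, from which $\|\Ind_{A_1}\|\lesssim\|\Ind_{A_2}\|$ follows by absorption for $\delta$ small; the reverse inequality is symmetric. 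The extension to unequal sizes $|A_1|\le|A_2|$ and the passage from positive to signed indicators (hence full superdemocracy) rely on the positive-coefficient characterization of unconditionality proved in Section \ref{tech}. For quasi-greediness, given $f$ with greedy set $A$ of cardinality $m$ and $\alpha=\min_{n\in A}|\xx_n^*(f)|$, I would apply (\ref{end2}) with $B$ disjoint from $\supp(f)$ and lying on one side of $A$; the $p$-norm yields
\begin{equation*}
\|f-P_A(f)\|^p \le \C^p\bigl(\|f\|^p + |\alpha|^p\|\Ind_B\|^p\bigr),
\end{equation*}
and Lemma \ref{lemmapbanach} provides $|\alpha|\|\Ind_A\|\lesssim\|P_A(f)\|\le\|f\|+\|f-P_A(f)\|$, which combined with the democracy $\|\Ind_B\|\lesssim\|\Ind_A\|$ yields, after absorption, $\|f-P_A(f)\|\lesssim\|f\|$.

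The hardest point will be the combinatorial bookkeeping in the closing implication: arranging test vectors compatible with the restrictive hypotheses of (\ref{end2}) (fixed $\alpha$, positive $\varepsilon$, ordered sets with equal cardinality) that nevertheless yield tight estimates between indicator norms, and then reconciling the equal-size, positive-sign output with the full superdemocracy required by (\ref{two}). The symmetry results of Section \ref{tech} should handle the sign issue cleanly, leaving the substantive work in the careful choice of $\delta$, of the auxiliary set $B$, and of the absorption constants in the $p$-norm estimates.
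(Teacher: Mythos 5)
The cycle you propose, $(\ref{two})\Rightarrow(\ref{dis})\Rightarrow(\ref{end})\Rightarrow(\ref{end2})\Rightarrow(\ref{two})$ with $(\ref{ag})\Leftrightarrow(\ref{two})$ quoted, is exactly the paper's; the two middle implications are indeed immediate, and your test vector $f=\Ind_{A_1}+(1+\delta)\Ind_{A_2}$ with absorption as $\delta\to 0$ is a perfectly sound way to extract equal--cardinality, positive--sign, ordered democracy from $(\ref{end2})$. The difficulties are in the two non-trivial implications, and there I see genuine gaps.

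In $(\ref{two})\Rightarrow(\ref{dis})$, you write that since $A\cap B=\emptyset$ ``the set $A$ remains a greedy set of $g$,'' but this fails whenever $|a|$ is large: the coefficients of $g=f-a\Ind_{\varepsilon,B}$ on $B$ have modulus roughly $|a|$, and these $n\in B$ lie outside $A$, so $A$ is not a greedy set of $g$ of any cardinality once $|a|$ exceeds $\min_{n\in A}|\xx_n^*(f)|$. The paper sidesteps this entirely by never treating $A$ as a greedy set of $g$; instead it bounds $\|f-P_A(f)\|\le\C_q\|f\|$ using that $A$ is greedy for $f$ (the actual hypothesis), then estimates $\|f\|$ from $\|g\|$ by splitting into the cases $|a|\le 2b$ and $|a|>2b$ with $b=\min_{n\in A}|\xx_n^*(f)|$, choosing a greedy set $D$ of $g$ containing $A$ in the first case and containing $B$ in the second. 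Both cases then hinge on truncation quasi-greediness (Theorem~\ref{theoremflattening}): $\min_{n\in D}|\xx_n^*(g)|\cdot\|\Ind_{\varepsilon(g),D}\|\le\Gamma_t\|g\|$, combined with superdemocracy. Neither the case split nor the use of $\Gamma_t$ appears in your sketch, and Lemma~\ref{lemmapbanach} cannot substitute for $\Gamma_t$ here.

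In $(\ref{end2})\Rightarrow(\ref{two})$ there are two separate problems. First, you propose to pass from equal-size, positive, ordered democracy to full superdemocracy by invoking Lemma~\ref{lemmacomplex+realuncond} (the positive-coefficient characterization of unconditionality). But almost-greedy bases need not be unconditional, so $(\ref{end2})$ cannot yield the hypothesis of that lemma; using it to handle the sign issue and the inequality $|A_1|\le|A_2|$ is a dead end. Second, for quasi-greediness you claim that ``Lemma~\ref{lemmapbanach} provides $|\alpha|\|\Ind_A\|\lesssim\|P_A(f)\|$.'' That lemma gives an \emph{upper} bound on $\|P_A(f)\|$ of the form $B_p\max_{n\in A}|\xx_n^*(f)|\cdot\max_{E\subset A}\|\Ind_E\|$, not a lower bound of the kind you need; $\alpha\|\Ind_A\|\lesssim\|P_A(f)\|$ is essentially a truncation estimate, which is not free in this direction. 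Even granting it, your final absorption step would require the constant multiplying $\|f-P_A(f)\|^p$ to be strictly less than one, which there is no reason to expect with arbitrary constants $\C$ and democracy constant. The paper's device that makes everything work is the auxiliary claim that $\|\Ind_{\varepsilon,A}\|\le B_p\C^2 2^{1/p-1}\|f\|$ whenever $f$ has at least $|A|$ coordinates of modulus $\ge 1$, proved by testing $(\ref{end2})$ on $\Ind_{A_0}+\Ind_D$ (choosing $A_0\subset A$ via Lemma~\ref{lemmapbanach} and $D>\supp(f)\cup A$) together with the decomposition $\Ind_D=\tfrac12[(\Ind_D+g)+(\Ind_D-g)]$ where $g=f_1-P_B(f_1)$. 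This single estimate yields both superdemocracy (take $f=\Ind_{\eta,A'}$) and quasi-greediness (bound $\min_{n\in A}|\xx_n^*(f)|\cdot\|\Ind_B\|$ by $\|f\|$ directly), with no absorption and no detour through unconditionality. That claim is the essential missing ingredient in your plan.
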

	
	In Theorem~\ref{thag}, the equivalence between \ref{ag} and \ref{two} is well known (see  \cite[Theorem 3.3]{DKKT2003}, \cite[Theorem 6.3]{AABW2021}), our contribution here are the other points, in particular \ref{dis} and \ref{end2}. 
	\section{Technical results}\label{tech}
	In this section, we focus our attention on some characterizations and relations between unconditionality and symmetry-like properties.
	
	As we have commented, in Theorem \ref{kt} it was proven that a basis in a Banach space is greedy if and only if it is democratic and unconditional and the behaviour of the constants is as follows: 
	$$\max\lbrace \K, \D\rbrace\leq \C_g\leq \K(1+\D).$$
	In the last decade, researchers have worked on improving this estimate since there are examples of $1$-democratic and $1$-unconditional bases that are not $1$-greedy. In 2006, F. Albiac and P.  Wojtaszczyk found necessary and sufficient conditions under which $\C_g=1$. Concretely, they introduced the notion of Property (A), extended and renamed in  \cite{DKOWS} by the so called symmetry for largest coefficients.
	
	\begin{definition}\label{defsym1}
		We say that a basis $\XB$ in a quasi-Banach space is \textit{symmetric for largest coefficients} (SLC for short) if there is $\C>0$ such that
		\begin{eqnarray}\label{defsym}
			\Vert f+\Ind_{\varepsilon, A}\Vert\leq \C\Vert f+\Ind_{\eta, B}\Vert,
		\end{eqnarray}
		for all $f\in\XX$ such that $\text{supp}(f)\cap (A\cup B)=\emptyset$, for all finite sets $A$ and $B$ such that $\vert A\vert\leq \vert B\vert$ and $A\cap B=\emptyset$ and all choices of signs $\varepsilon\in\EE^A$ and $\eta\in\EE^B$. The least constant verifying \eqref{defsym} is denoted by $\Delta$ and we say that $\XB$ is $\Delta$-SLC.
	\end{definition}
	
	Using this concept, in  \cite{DKOWS} we can find the following.
	\begin{theorem}
		A basis in a Banach space is greedy if and only if the basis is symmetric for largest coefficients and unconditional. Quantitatively, 
		$$\max\lbrace \Delta, \K\rbrace\leq \C_g\leq \Delta \K.$$
	\end{theorem}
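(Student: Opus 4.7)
The plan is to establish both implications of the equivalence with the quantitative bounds $\max\{\Delta,\K\}\le\C_g\le\Delta\K$. The lower bounds follow from short perturbation arguments, while the substantive content is the upper bound, which I would prove by a \emph{swap-and-project} decomposition of $f-P_A(f)$.

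For $\K\le\C_g$, given a finite $A$ and $f\in\XX$, I would form $h:=f+t\Ind_{\varepsilon,A}$ with $\varepsilon\in\EE^A$ matching $\sgn(\xx_j^*(f))$ on $A$ and $t$ large enough that $A$ becomes a greedy set of $h$ of size $|A|$. Applying greediness to $h$ against the approximation $t\Ind_{\varepsilon,A}$ (a polynomial with $|A|$ terms) yields $\sigma_{|A|}(h)\le\|f|_{A^c}\|$ and hence $\|f|_{A^c}\|\le\C_g\|f\|$; this is the suppression form of unconditionality, equivalent to the asserted bound. For $\Delta\le\C_g$, given the SLC setup $(f,A,B,\varepsilon,\eta)$ with $\|f\|_\infty<1$ (after rescaling), I would use the perturbation $h:=\Ind_{\varepsilon,A}+\Ind_{\eta,B}+f$: all $A\cup B$-coefficients have modulus $1$, strictly exceeding those of $f$, so greedy sets of any cardinality between $|A|$ and $|B|$ can be chosen freely inside $A\cup B$. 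In the case $|A|=|B|$, picking the greedy set as $B$ and approximating $h$ by $\Ind_{\eta,B}$ yields $\|f+\Ind_{\varepsilon,A}\|\le\C_g\|f+\Ind_{\eta,B}\|$ directly; the case $|A|<|B|$ is handled by an extension-and-projection step.

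For the upper bound, fix a greedy set $A$ of $f$ with $|A|=m$ and a competitor $g=\sum_{j\in B}a_j\xx_j$ with $|B|\le m$; after padding, assume $|B|=m$. Set $D:=A\setminus B$, $E:=B\setminus A$ (so $|D|=|E|$), $\alpha:=\min_{j\in A}|\xx_j^*(f)|$, and $w:=f|_{(A\cup B)^c}$. Starting from the identity $f-P_A(f)=w+f|_E$, the argument proceeds in three linked steps: (i) use unconditionality and $|\xx_j^*(f)|\le\alpha$ on $E$ to pass from $f|_E$ to $\alpha\Ind_{\varepsilon,E}$ for suitable signs $\varepsilon$; (ii) apply SLC with background $w$ and the disjoint pair $(E,D)$ (disjoint from each other, from $A\cap B$, and from $\supp w$) to swap $\alpha\Ind_{\varepsilon,E}$ for $\alpha\Ind_{\eta,D}$ at cost $\Delta$; (iii) use unconditionality and $|\xx_j^*(f)|\ge\alpha$ on $D$ to pass from $\alpha\Ind_{\eta,D}$ back to $f|_D$, producing $w+f|_D=f|_{B^c}=(f-g)|_{B^c}$. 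Since this last object is a projection of $f-g$, its norm is at most $\K\|f-g\|$. Combining the three steps yields $\|f-P_A(f)\|\le\Delta\K\|f-g\|$, and infimizing over $g$ gives $\C_g\le\Delta\K$.

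The principal obstacle is the economy of constants in steps (i)--(iii): performed naively via the contraction principle, each scaling costs an extra factor of $\K$, which would yield $\Delta\K^3$ rather than $\Delta\K$. Avoiding this inflation requires absorbing both scalings and the final projection onto $B^c$ into a single use of the lattice-type unconditionality inequality $\|\sum c_j\xx_j\|\le\K\max_j|c_j/d_j|\|\sum d_j\xx_j\|$ (valid in Banach spaces with unconditional basis and matched signs), applied when passing from $w+\alpha\Ind_{\eta,D}$ to $(f-g)|_{B^c}$ in one stroke. Verifying at each step that the disjointness hypotheses of SLC hold---automatic from the decomposition $D\sqcup E\subset A\triangle B$ together with $\supp w\subset(A\cup B)^c$---is routine but essential.
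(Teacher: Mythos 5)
Your overall strategy---perturbation for the lower bounds, swap-and-project through the pair $(E,D)=(B\setminus A,\,A\setminus B)$ for the upper bound---is the standard route, and the paper itself only cites this theorem from \cite{DKOWS} without giving a proof. The genuine gap lies exactly where you flag the principal obstacle: your proposed fix for the constant does not work as stated. The ``single use of the lattice-type unconditionality inequality'' you describe passes from $w+\alpha\Ind_{\eta,D}$ to $f-g$. That step is fine: $w+\alpha\Ind_{\eta,D}$ is obtained from $f-g$ by multiplying the $j$-th coordinate by a scalar in $[0,1]$ (one on $(A\cup B)^c$, $\alpha/|\xx_j^*(f)|$ on $D$, zero elsewhere) with matching signs, so it merges your step (iii) and the projection onto $B^c$ at the price of a single $\K$. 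But this same application cannot also swallow step (i). The vector $w+P_E(f)$ is \emph{not} a contraction of $f-g$: on $E$ the quotient $\xx_j^*(f)/(\xx_j^*(f)-a_j)$ is unbounded, and in any case the SLC swap of $E$ for $D$ has to occur between the two scalings. As written, therefore, you spend $\K$ on step (i), $\Delta$ on step (ii), and $\K$ on the merged step (iii)+projection, which only yields $\C_g\le\Delta\K^2$.

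To get $\Delta\K$, step (i) must be performed without invoking unconditionality. The standard device is convexity (Abel/layer-cake), not the contraction principle. For $j\in E$ write $\xx_j^*(f)=\varepsilon_j|\xx_j^*(f)|$ with $\varepsilon_j\in\EE$ and recall $|\xx_j^*(f)|\le\alpha$; then
\begin{equation*}
w+P_E(f)\;=\;\frac{1}{\alpha}\int_0^\alpha\Bigl(w+\alpha\Ind_{\varepsilon,E_t}\Bigr)\,dt, \qquad E_t:=\{j\in E:\ |\xx_j^*(f)|>t\},
\end{equation*}
so convexity of the norm gives $\|w+P_E(f)\|\le\sup_{0\le t\le\alpha}\|w+\alpha\Ind_{\varepsilon,E_t}\|$ with no constant. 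For each $t$, choose $D_t\subset D$ with $|D_t|=|E_t|$ and set $\eta_j:=\sgn(\xx_j^*(f))$; the SLC hypotheses ($E_t$, $D_t$ and $\supp w$ pairwise disjoint) hold, so $\|w+\alpha\Ind_{\varepsilon,E_t}\|\le\Delta\|w+\alpha\Ind_{\eta,D_t}\|$, and your merged lattice step with $D_t$ in place of $D$ gives $\|w+\alpha\Ind_{\eta,D_t}\|\le\K\|f-g\|$. Chaining these three inequalities produces $\C_g\le\Delta\K$. This also makes the ``matched signs'' caveat clean: the convex step uses nothing but the norm.

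A secondary slip in the argument for $\Delta\le\C_g$: with $h:=\Ind_{\varepsilon,A}+\Ind_{\eta,B}+f$ and greedy set $B$, the competitor that certifies $\sigma_{|B|}(h)\le\|f+\Ind_{\eta,B}\|$ is $\Ind_{\varepsilon,A}$ (a polynomial with $|A|\le|B|$ terms), not $\Ind_{\eta,B}$; the latter reproduces the greedy sum $P_B(h)$ and gives a trivial inequality. With the corrected competitor the bound follows at once for all $|A|\le|B|$, with no extension-and-projection step needed.
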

	\begin{remark}\label{rem1}
		In the case of $p$-Banach spaces, in \cite{AABW2021} we find the same result with the following behaviour: 
		$$\max\lbrace \Delta, \K\rbrace\leq \C_g\leq A_p^2\Delta \K,$$
		where $A_p$ is defined as in \eqref{ap}. Also, changing the symmetry for largest coefficients by democracy for general $p$-Banach spaces, we have the bounds (see \cite{AABW2021})
		$$\C_g\leq \K(1+A_p^p\D^p\min\lbrace B_p^p,A_p^p\K^p\rbrace)^{1/p},$$
		where $B_p$ is defined as in \eqref{bp}.
	\end{remark}
	In this line, we modify the proof of \cite[Proposition 2.11]{AAB2023} to show that in order to prove that a basis is unconditional, it is enough consider approximations by vectors with nonnegative coefficients. This result will be used in the proof of Theorem \ref{th1}.

	 We use the following notation: for a given basis $\XB$,

	\begin{align*}
		\XX_{\RR}(\XB):=&\left\lbrace f\in \XX\colon \xx_n^*(f)\in \RR\quad\forall n\in \NN\right\rbrace;\\
		\XX_{+}(\XB):=&\left\lbrace f\in \XX\colon \xx_n^*(f)\in \RR_{\ge 0}\quad\forall n\in \NN\right\rbrace.
	\end{align*}

	\begin{lemma}\label{lemmacomplex+realuncond}Let $\XB$ be a basis for a $p$-Banach space $\XX$ over $\FF$ and $\C>0$. Suppose that for every $f \in \spn\{\XB\}$, $g\in \spn\{\XB\}\cap \XX_{+}(\XB)$ with $\supp(f)\cap \supp(g)=\emptyset$, 
		\begin{align}
			\|f\|\le \C\|f+g\|. \label{supuncond}
		\end{align}
		Then $\|f\|\le \K\|f+g\|$ for every $f, g\in\mathbb X$ with $\supp(f)\cap \supp(g)=\emptyset$, with $\K=\C^2$ if $\FF=\RR$ and $\K$ depending only on $\C$ and $p$ if $\FF=\CC$. Thus, $\XB$ is $\K$- unconditional. 
	\end{lemma}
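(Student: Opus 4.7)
The plan is to follow the argument of \cite[Proposition 2.11]{AAB2023}, first establishing the inequality on $\spn\{\XB\}$ before extending by density. When $\FF = \RR$, given $f, g \in \spn\{\XB\}$ with disjoint supports, decompose $g = g_+ - g_-$ with $g_+ := \sum_{\xx_n^*(g) > 0} \xx_n^*(g)\, \xx_n$ and $g_- := \sum_{\xx_n^*(g) < 0} |\xx_n^*(g)|\, \xx_n$; both lie in $\XX_+(\XB)$ with pairwise disjoint supports, each disjoint from $\supp(f)$. Applying the hypothesis to the pair $(f - g_-,\, g_+)$ yields $\|f - g_-\| \le \C\|f + g\|$, and applying it to $(-f,\, g_-)$ yields $\|f\| = \|-f\| \le \C\|f - g_-\|$. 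Composing gives $\|f\| \le \C^2\|f+g\|$.

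When $\FF = \CC$, first observe that the isometry $h \mapsto e^{-i\theta} h$ combined with the hypothesis yields the rotated inequality $\|f\| \le \C\|f + \mu g\|$ for every $\mu \in \CC$ and $g \in \XX_+(\XB)$ disjoint from $f$ (apply the hypothesis to $(e^{-i\arg\mu} f,\, |\mu| g)$). Running the real-case argument with rotations then gives $\|f\| \le \C^2\|f+g\|$ whenever $g \in \spn\{\XB\}$ has purely real or purely imaginary coefficients with support disjoint from $\supp(f)$. Decompose $\supp(g) = T_r \sqcup T_i \sqcup T_{ri}$ according to whether $\xx_n^*(g)$ has zero imaginary part, zero real part, or both parts nonzero, and successively peel off $g|_{T_r}$ and $g|_{T_i}$ to obtain $\|f + g|_{T_{ri}}\| \le \C^4\|f+g\|$. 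The main obstacle is the remaining reduction $\|f\| \le C(\C,p)\|f + g|_{T_{ri}}\|$: writing $g|_{T_{ri}} = u + iv$ with $u, v \in \XX_+(\XB)$ forces $\supp(u) = \supp(v) = T_{ri}$, so the hypothesis cannot be used to strip the imaginary piece $iv$ from $(f+u)+iv$ directly. To overcome this, partition $T_{ri}$ into four quadrant subsets based on the signs of $\operatorname{Re}(\xx_n^*(g))$ and $\operatorname{Im}(\xx_n^*(g))$, and by rotation reduce to the first-quadrant case; then, adapting the quantitative scheme of \cite{AAB2023}, combine the $p$-Banach Lemma~\ref{lemmapbanach}---which reduces bounded complex-scalar combinations of basis vectors to unimodular-sign combinations up to the factor $A_p$---with the hypothesis to absorb the remaining pieces in finitely many steps, with accumulated constants controlled by $\C$ and $p$.

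Finally, extend the inequality to arbitrary $f, g \in \XX$ with disjoint supports by density: for fixed $f \in \spn\{\XB\}$ with finite support $A$, approximate any $g \in \XX$ satisfying $A \cap \supp(g) = \emptyset$ by $g_k := \tilde g_k - P_A(\tilde g_k) \in \spn\{\XB\}$, where $\tilde g_k \in \spn\{\XB\}$ converges to $g$; since $\xx_n^*(g) = 0$ for $n \in A$ and $P_A$ is continuous, $g_k \to g$ in $\XX$, and the inequality passes to the limit. Applying the result with $f = P_A(h)$ and $g = h - P_A(h)$ for arbitrary $h \in \XX$ and finite $A \subset \NN$ gives $\|P_A(h)\| \le \K\|h\|$, i.e., $\XB$ is $\K$-unconditional.
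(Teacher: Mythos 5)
Your real-scalar argument is correct and essentially the same as the paper's: split $g$ into its positive and negative parts $g_+, g_- \in \XX_+(\XB)$ and apply the hypothesis twice, once after a sign flip, to get the $\C^2$ bound. Your density extension at the end is also fine, since unconditionality only requires the inequality when $f=P_A(h)$ is finitely supported.

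The complex case, however, has a genuine gap precisely at the step you flag as ``the main obstacle.'' The initial observations are sound: rotation of the hypothesis does give $\|f\|\le \C\|f+\mu g\|$ for $g\in\XX_+(\XB)$ and arbitrary $\mu\in\CC$, and peeling off $g|_{T_r}$ and $g|_{T_i}$ does reduce to coefficients with both real and imaginary parts nonzero. But from there you only offer a sketch: partition by quadrant, ``combine Lemma~\ref{lemmapbanach}\ldots with the hypothesis to absorb the remaining pieces in finitely many steps.'' This is not a proof and, as written, the plan does not address the central difficulty. Even after restricting to a quadrant and discretizing the arguments $\arg(\xx_n^*(g))$ into small arcs, the moduli $|\xx_n^*(g)|$ still vary freely across $\supp(g)$, so $g$ restricted to an arc is not a rotate of a vector $\Ind_{\varepsilon,A}$ with unimodular entries; Lemma~\ref{lemmapbanach} gives upper estimates in terms of sign-vectors but gives no way to produce the lower bound $\|f\|\lesssim\|f+g\|$ needed here. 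The paper resolves exactly this point by first proving the inequality for $g=\Ind_{\varepsilon,A}$ (with $\varepsilon$ close to a single direction, then covering $\EE$ by finitely many arcs), and then disposing of the varying moduli by a renormalization trick: replace each $\xx_n$ with $n\in\supp(g)$ by $|\xx_n^*(g)|\,\xx_n$, observe that the hypothesis \eqref{supuncond} is invariant under this positive rescaling, and apply the $\Ind_{\varepsilon,A}$ case to the new basis. That renormalization is the missing idea in your sketch; without it, or some explicit substitute, the complex case is not established.
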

	\begin{proof}
		By a standard density argument (or use Lemma~\ref{l:appr_fun_suppgeneral} below), it is enough to prove that \eqref{supuncond} holds with $\K$ instead of $\C$ for $f \in \spn\{\XB\}$, $g\in \spn\{\XB\}$ with $\supp(f)\cap \supp(g)=\emptyset$. If $\FF=\RR$,  choose disjointly supported \color{black} $f, g \in \spn\{\XB\}$, and write $g=g_1-g_2$ with $g_1,g_2\in \XX_{+}(\XB)\cap \spn\{\XB\}$, and $f, g_1, g_2$ pairwise disjointly supported. We have
		\begin{align*}
			\|f\|\le& \C\|f+g_1\|=\C\|-f-g_1\|\le \C^2\|-f-g_1-g_2\|=\C^2\|f+g\|, 
		\end{align*}
		and the proof is complete. \\
		Now suppose $\FF=\CC$, and let $\K_1:=\left(1+\C^{p}\right)^{\frac{1}{p}}$. By $p$-convexity and hypothesis
		\begin{align}
&\|g\|\le \left(\|g+f\|^p+\|f\|^p\right)^{\frac{1}{p}}\le \K_1\|g+f\|\nonumber\\
&\forall g\in \spn\{\XB\}\cap \XX_{+}(\XB), f\in \spn\{\XB\},\supp(f)\cap \supp(g)=\emptyset. \label{Ccuadrado}
		\end{align}
		We claim that there is $\K_2$ depending only on $p$ and $\C$ such that, for every $A\subset \NN$ with $0<|A|<\infty$, $f\in \spn\{\XB\}$ with $\supp(f)\cap A=\emptyset$, and $(\varepsilon_n)_{n\in A}\in \EE^A$, 
		\begin{align}
			\left\Vert \Ind_{\varepsilon, A}\right\Vert\le& \K_2\left\Vert \Ind_{\varepsilon, A}+f\right\Vert.\label{claim1}
		\end{align}
		To prove our claim, first choose a finite nonempty set $A\subset \NN$, $f\in \spn\{\XB\}$ with $\supp(f)\cap A=\emptyset$, and $(\varepsilon_n)_{n\in A}\in \EE^A$ so that
		\begin{align*}
			\left\vert \varepsilon_n-1\right\vert\le \left(2^{\frac{1}{p}}\K_1B_p\right)^{-1} \qquad \forall n\in A.
		\end{align*}
		Pick $B\subset A$ so that $\|\Ind_{D}\|\le \|\Ind_{B}\|$ for all $D\subset A$. By \eqref{Ccuadrado}  and Lemma~\ref{lemmapbanach}, 
		\begin{align*}
			\left\Vert \Ind_{B}\right\Vert^p\le& \K_1^p\left\Vert \Ind_{B}+\Ind_{\varepsilon, A\setminus B}+ f \right\Vert^p\\
			\le& \K_1^p\left\Vert \Ind_{\varepsilon, B}+\Ind_{\varepsilon, A\setminus B}+f\right\Vert^p+\K_1^p\left\Vert \Ind_{\varepsilon, B}-\Ind_{B}\right\Vert^p \\
			\le& \K_1^p\left\Vert \Ind_{\varepsilon, A}+f\right\Vert^p+  \K_1^p B_p^p \left(2^{\frac{1}{p}}\K_1B_p\right)^{-p}\left\Vert \Ind_{B}\right\Vert^p\\
			\le& \K_1^p\left\Vert \Ind_{\varepsilon, A}+f\right\Vert^p+\frac{1}{2}\left\Vert \Ind_{B}\right\Vert^p,
		\end{align*}
		so 
		\begin{align*}
			\left\Vert \Ind_{B}\right\Vert^p\le& 2\K_1^p\left\Vert \Ind_{\varepsilon, A}+f\right\Vert^p. 
		\end{align*}
		Another application of Lemma~\ref{lemmapbanach} gives 
		\begin{align*}
			\left\Vert \Ind_{\varepsilon, A}\right\Vert^p\le& \left\Vert \Ind_{A}\right\Vert^p+ \left\Vert \Ind_{\varepsilon, A}-\Ind_{A}\right\Vert^p\le \left\Vert \Ind_{A}\right\Vert^p+ B_p^p \left(2^{\frac{1}{p}}\K_1B_p\right)^{-p} \left\Vert \Ind_{B}\right\Vert^p
			\le\frac{3}{2}\left\Vert \Ind_{B}\right\Vert^p.
		\end{align*}
		Hence,
		\begin{align}
			\left\Vert \Ind_{\varepsilon, A}\right\Vert^p\le& 3\K_1^p\left\Vert \Ind_{\varepsilon, A}+f\right\Vert^p.\label{cercanosa1}
		\end{align}
		Now let $\{\nu_1,\dots,\nu_{j_1}\}\subset\EE$ be a set of minimum cardinality with the property that, for every $\varepsilon\in \EE$ there is $1\le j\le j_1$ such that $\left\vert \varepsilon-\nu_j\right\vert \le  \left(2^{\frac{1}{p}}\K_1B_p\right)^{-1}$, and fix $A\subset \NN$ a finite nonempty set, $f\in \spn\{\XB\}$ with $\supp(f)\cap A=\emptyset$, and $(\varepsilon_n)_{n\in A}\in \EE^A$. Choose $I\subset \{1,\dots,j_1\}$ and $(A_j)_{j\in I}$ a partition of $A$ such that $\left\vert \varepsilon_n-\nu_j\right\vert \le  \left(2^{\frac{1}{p}}\K_1B_p\right)^{-1}$ for each $n\in A_j$ and each $j\in I$. By \eqref{cercanosa1}, 
		\begin{align*}
			\left\Vert \Ind_{\varepsilon, A}\right\Vert^p\le&\sum_{j\in I}\left\Vert \nu_j^{-1}\Ind_{\varepsilon, A_j}\right\Vert^p\le \sum_{j\in I}3\K_1^p\left\Vert \nu_j^{-1}\Ind_{\varepsilon, A_j}+\nu_j^{-1}\Ind_{\varepsilon, A\setminus A_j}+\nu_j^{-1}f \right\Vert^p\\
			=& 3|I|\K_1^p\left\Vert \Ind_{\varepsilon, A}+f\right\Vert^p\le 3j_1\K_1^p\left\Vert \Ind_{\varepsilon, A}+f\right\Vert^p.
		\end{align*}
		Note that $\K_1$ only depends on $\C$ and $p$, whereas $j_1$ only depends on $\K_1$ and $p$. Thus, we have proven \eqref{claim1} with $\K_2:= \left(3j_1\right)^{\frac{1}{p}}\K_1$. To complete the proof of the Lemma, fix $f,g\in \spn\{\XB\}\setminus \{0\}$ with disjoint support. Let $A:=\supp(g)$, and define $\YB=(\yy_n)_{n\in \NN}$ by
		\begin{align*}
			\yy_n=&\begin{cases}
				\left\vert \xx_n^{*}(g)\right\vert\xx_n & \text{if } n\in A;\\
				\xx_n & \text{if } n\not\in A. 
			\end{cases}
		\end{align*}
		Then $\YB$ is a basis for $\XX$ and, since each $\yy_n$ is the product of $\xx_n$ by a positive scalar, \eqref{supuncond} also holds substituting $\YB$ and $\XX_{+}(\YB)$ for $\XB$ and $\XX_{+}(\XB)$ respectively. \color{black} Since $\K_2$ depends only on $\C$ and $p$, the same argument given above shows that \eqref{claim1} holds for $\YB$ as well. Therefore, 
		\begin{align*}
			\|g\|=&\left\Vert \Ind_{\varepsilon(g), A}[\YB,\XX]\right\Vert \le \K_2\left\Vert \Ind_{\varepsilon(g), A}[\YB,\XX]+f\right\Vert=\K_2\|g+f\|.
		\end{align*}
		This completes the proof of the Lemma, with $\K=\K_2$. 
	\end{proof}

	\begin{remark}\label{remarknotbounded}\rm While we have defined our basis to be bounded with bounded dual basis, neither condition is required in the proof of Lemma~\ref{lemmacomplex+realuncond}. 	
	\end{remark}
	
	With respect to the symmetry for largest coefficients, in \cite{BDKOW2019}, the authors proved that in the case of Schauder bases in Banach spaces, in the definition of the SLC property it is enough to consider elements $f\in\mathbb X$ with finite support (their proofs hold for general Markushevich bases).
	
	The same result holds in the context of general bases of $p$-Banach spaces. First of all, we consider the extension for Schauder bases, where the proof is immediate.
	
	\begin{lemma}\label{l:appr_fun_suppSchauder}
		Let $\XB$ be a Schauder basis of a $p$-Banach space $\mathbb X$. 				Suppose $D$ is a finite subset of $\mathbb N$, and $f \in \mathbb X$
		satisfies $\supp(f) \cap D = \emptyset$.
		Then, for any $\varepsilon > 0$ there exists a finitely supported $y \in \mathbb X$ such that
		$\Vert f - y\Vert < \varepsilon$, $\supp(y) \cap D = \emptyset$,
		and $\max_j |\xx_j^*(f)| = \max_j |\xx_j^*(y)|$.
	\end{lemma}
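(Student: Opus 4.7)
The plan is to produce the approximating vector by a straight truncation of the Schauder expansion of $f$. Writing $S_N(f):=\sum_{j=1}^{N}\xx_j^*(f)\xx_j$, the Schauder basis property gives $S_N(f)\to f$ in $\XX$, and $S_N(f)$ is finitely supported with $\supp(S_N(f))\subseteq \supp(f)$, so automatically $\supp(S_N(f))\cap D=\emptyset$. The only nontrivial requirement is preservation of the maximum coefficient $\max_j|\xx_j^*(f)|$, which will force me to choose $N$ large enough to capture an index where this maximum is attained.

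To see that the maximum is in fact attained, first I would use the standing assumption from Section~\ref{notation} that $\sup_n\|\xx_n\|\le c_1$ and $\sup_n\|\xx_n^*\|_*\le c_2$. The biorthogonality $\xx_n^*(\xx_n)=1$ forces $\|\xx_n\|\ge 1/c_2>0$; combined with the convergence $S_N(f)\to f$, we have $\xx_{n+1}^*(f)\xx_{n+1}=S_{n+1}(f)-S_N(f)\to 0$, so $|\xx_n^*(f)|\to 0$. Since the sequence $(|\xx_n^*(f)|)_n$ is bounded and tends to zero, either $f=0$ (take $y=0$) or the supremum is attained at some index $j_0\in\NN$.

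Having fixed such a $j_0$, I would pick $N\ge j_0$ large enough that $\|f-S_N(f)\|<\varepsilon$, and set $y:=S_N(f)$. Then $\xx_{j_0}^*(y)=\xx_{j_0}^*(f)$, while for every other $j$ one has $|\xx_j^*(y)|$ equal to $|\xx_j^*(f)|$ if $j\le N$ and $0$ otherwise; in every case $|\xx_j^*(y)|\le|\xx_{j_0}^*(f)|=\max_k|\xx_k^*(f)|$. Hence $\max_j|\xx_j^*(y)|=\max_j|\xx_j^*(f)|$, and all three conditions are met. There is no real obstacle: the argument is a direct consequence of the Schauder basis property together with the boundedness of the basis and its dual.
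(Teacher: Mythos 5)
Your proof is correct and follows essentially the same route as the paper: truncate the Schauder expansion $S_{n_1}(f)$ past an index $n_1\ge n_0$ where the maximum coefficient is attained. The only difference is that you supply the (short) justification that $\max_j|\xx_j^*(f)|$ is actually attained, which the paper takes for granted by writing $\|f\|_{\ell_\infty}$; apart from the small typo ($S_{n+1}(f)-S_N(f)$ should read $S_{n+1}(f)-S_n(f)$), that extra step is sound.
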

	\begin{proof}
		Choose $n_0\in \mathbb N$ so that $|\xx_{n_0}^*(f)|=\|f\|_{\ell_{\infty}}$, and $n_1\ge n_0$ so that $\|f-S_{n_1}(f)\|<\varepsilon$, and let $y:=S_{n_1}(f)$. 
	\end{proof}
	
	For general bases, the result can be obtained by adapting the proofs of \cite[Lemma 7.1]{BB2020} or \cite[Lemma 2.2]{Oikhberg2017}. Nevertheless, we give a proof for the sake of completion. 
	
	\begin{lemma}\label{l:appr_fun_suppgeneral}
		Let $\XB$ be a basis of a $p$-Banach space $\XX$ and let $f \in \mathbb X$. The following hold: 
		\begin{enumerate}[\rm i)]
			\item \label{notempty2}If $\supp(f)\not=\emptyset$,  for every $D\subset \NN$ finite and every $\varepsilon > 0$ there exists $g\in \spn\lbrace{\XB\rbrace}$ such  that 
\begin{align}
&\Vert f - g\Vert < \varepsilon, \qquad P_D(g)=P_D(f), \qquad \max_{j\in D^c} |\xx_{j}^*(f)| = \max_{j\in D^c} |\xx_{j}^*(g)|\label{condempty23}.
\end{align}			
In particular, if $D\cap \supp(f)=\emptyset$, then $D\cap \supp(g)=\emptyset$ and $\max_{j} |\xx_j^*(g)|=\max_{j} |\xx_j^*(f)|$. 
\item \label{empty2}If $\supp(f)=\emptyset$, for every $D\subset \NN$ finite and every $\varepsilon > 0$ there exists $g\in \spn\lbrace{\XB\rbrace}$ such  that
			$\Vert f - g\Vert < \varepsilon$, $\max_j |\xx_j^*(g)|<\epsilon$, and $P_D(g)=P_D(f)=0$. 
		\end{enumerate}		
					
	\end{lemma}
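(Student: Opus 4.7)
The plan is to prove both parts by density-plus-correction: approximate $f$ by $h\in\spn\{\XB\}$, then adjust finitely many coordinates of $h$ so that the three conditions hold. Since $|\xx_j^*(u)|\le c_2\|u\|$ uniformly in $j$, any coordinate mismatch $|\xx_j^*(h)-\xx_j^*(f)|$ is at most $c_2\|f-h\|$; Lemma~\ref{lemmapbanach} together with $p$-convexity then bounds the $p$-norm of any finite correction in terms of its number of altered coordinates, the uniform bounds $c_1,c_2$, and $\|f-h\|$.

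For part \ref{empty2}, pick $h\in\spn\{\XB\}$ with $\|f-h\|<\delta$. Since $\xx_j^*(f)=0$ everywhere, $|\xx_j^*(h)|\le c_2\delta$ for every $j$; setting $g:=h-P_D(h)$ then gives $P_D(g)=0=P_D(f)$, $\max_j|\xx_j^*(g)|\le c_2\delta$, and $\|f-g\|^p\le\|f-h\|^p+\|P_D(h)\|^p\le\delta^p(1+|D|c_1^p c_2^p)$ by $p$-convexity. Choosing $\delta$ small enough yields all three requirements.

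For part \ref{notempty2}, set $M:=\sup_{j\in D^c}|\xx_j^*(f)|$ and suppose first $M>0$. Fix $n_0\in D^c$ with $|\xx_{n_0}^*(f)|$ equal to (or arbitrarily close to) $M$, and let $\alpha_{n_0}:=M\,\xx_{n_0}^*(f)/|\xx_{n_0}^*(f)|$, so $|\alpha_{n_0}|=M$ exactly. Given $h\in\spn\{\XB\}$ close to $f$, I would define
\[
g:=P_D(f)+\alpha_{n_0}\xx_{n_0}+\sum_{j\in\supp(h)\setminus(D\cup\{n_0\})}\beta_j\xx_j,
\]
where $\beta_j:=\xx_j^*(h)$ if $|\xx_j^*(h)|\le M$ and $\beta_j:=M\,\xx_j^*(h)/|\xx_j^*(h)|$ otherwise (so each truncated coordinate has modulus exactly $M$). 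Direct verification yields $P_D(g)=P_D(f)$ and $\max_{j\in D^c}|\xx_j^*(g)|=M$, attained at $n_0$ and bounded by $M$ on all other indices in $D^c$. For the norm estimate, each scalar entering $h-g$ has modulus at most $c_2\|f-h\|$ (for the truncation indices one uses $|\xx_j^*(h)|-M\le|\xx_j^*(h)-\xx_j^*(f)|$, since $|\xx_j^*(f)|\le M$; for the $D$-correction, $|\xx_j^*(h)-\xx_j^*(f)|\le c_2\|f-h\|$; the $n_0$-correction adds at most the extra term $M-|\xx_{n_0}^*(f)|$), and Lemma~\ref{lemmapbanach} sums this across at most $|\supp(h)|+|D|+1$ indices. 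The degenerate case $M=0$ corresponds to $\supp(f)\subseteq D$, for which $g:=P_D(f)$ works (with the residual $f-P_D(f)$ of empty support handled by part \ref{empty2}).

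The main technical obstacle is that the bound on $\|h-g\|$ involves the number $|\supp(h)|$, which is itself determined by the approximation chosen. I would resolve this by fixing $D$ and $n_0$ first, selecting any $h$ and reading off its support size $N=|\supp(h)|$, and then—by density of $\spn\{\XB\}$—refining to an $h'\in\spn\{\XB\}$ whose $\|f-h'\|$ lies below the explicit threshold $\varepsilon/C(N,|D|,c_1,c_2,p)$; iterating this refinement until the cumulative bound falls below $\varepsilon$ produces the required approximation. The attainment-of-supremum subtlety for $n_0$ is absorbed by the phase-alignment step, whose coordinate cost $M-|\xx_{n_0}^*(f)|$ can be made arbitrarily small by picking $n_0$ with $|\xx_{n_0}^*(f)|$ sufficiently close to $M$.
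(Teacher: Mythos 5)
Your argument for part \ref{empty2} is correct, and it is in fact a simplification of the paper's: the paper reduces \ref{empty2} to \ref{notempty2} by attaching a small multiple of a basis vector $\xx_{n_0}$ outside $D$ to $f$ so as to manufacture a nonempty support, whereas you directly subtract $P_D(h)$ from a dense approximant $h$ and estimate via $p$-convexity.

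The argument for part \ref{notempty2}, however, has a genuine gap in the aggregation of the correction. Besides the $|D|+1$ indices in $D\cup\{n_0\}$, you alter every $j\in\supp(h)\setminus(D\cup\{n_0\})$ with $|\xx_j^*(h)|>M$. Each alteration has scalar size at most $c_2\|f-h\|$, but the number of such $j$ is controlled only by $|\supp(h)|$, and by $p$-convexity the resulting estimate on $\|h-g\|$ carries a factor of order $(|\supp(h)|)^{1/p}$. You cannot drive this down together with $\|f-h\|$: selecting a refinement $h'$ with $\|f-h'\|<\varepsilon/C(|\supp(h)|,\ldots)$ proves nothing, because the constant that actually enters the estimate is $C(|\supp(h')|,\ldots)$, and $|\supp(h')|$ may be much larger than $|\supp(h)|$. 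Lemma~\ref{lemmapbanach} does not rescue the step either, since the quantity $\max_{B\subset\supp(h)}\|\Ind_B\|$ appearing there is not a priori bounded. There is also no reason for the number of truncated indices to decay as $\|f-h\|\to 0$: for a general (non-Schauder) basis arbitrarily many coefficients of $f$ outside $D$ may accumulate near $M$, so the set $\{j:|\xx_j^*(h)|>M\}$ need not stabilize.

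The paper avoids this by replacing the coordinate-by-coordinate truncation with a single global rescaling. In the case $D\cap\supp(f)=\emptyset$ one normalizes $\max_j|\xx_j^*(f)|=1$, sets $u:=P_{D^c}(h)$ for a dense approximant $h$, observes that $C:=\max_j|\xx_j^*(u)|$ lies in $(1-c\delta,1+c\delta)$, and takes $g:=u/C$. A single scalar multiplication restores $\max_j|\xx_j^*(g)|=1$ exactly, and its cost $|1-C^{-1}|\,\|u\|$ is controlled by $\delta$ and $\|f\|$, completely independently of $|\supp(h)|$. The general case $P_D(f)\neq0$ is then handled by applying this to $f_1:=f-P_D(f)$ and adding $P_D(f)$ back. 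You should replace your truncation step with this rescaling; once you do, the fixed-coordinate corrections over $D\cup\{n_0\}$ that you do control are no longer needed in the same form.
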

	\begin{proof}
	Pick $n_0\not\in D$ and set 
$$
c:=\max_{n\in D\cup \{n_0\}}\{\|\xx_n^*\|+\|\xx_n\|\}. 
$$	
To prove \eqref{condempty23}, first we consider the  case $D\cap \supp(f)=\emptyset$. We may assume that $\max_j |\xx_j^*(f)| = 1$ (then $\Vert f \Vert \geq 1/c$) and $\varepsilon < 1/(2c)$. 	
 Set $\delta>0$ so that 		\begin{align*}
			&0<\delta\le \frac{\varepsilon}{3c^2\|f\|};&&\left(\delta^p + \left(\frac{c \delta}{1 - c \delta}\right)^p( \Vert f\Vert^p + \delta^p)  \right)^{\frac{1}{p}} < \varepsilon.
		\end{align*}
		As $\spn\lbrace{\XB\rbrace}$ is dense in $\mathbb X$, there exists $h \in\spn\lbrace{\XB\rbrace}$ such that $\Vert f - h\Vert < \delta/\Vert P_D^c\Vert$. Let $u = P_D^c(h)$, then
		$\Vert f - u\Vert = \Vert P_D^c(f-h)\Vert < \delta$. For every $j$, $|\xx_j^*(f-u)| < c \delta$,
		hence $C = \max_j |\xx_j^*(u)| \in (1 - c \delta, 1 + c \delta)$.
		Now let $g = u/C$. Then $\max_j |\xx_j^*(g)| = 1$, and
		\begin{align*}
			\Vert f-g\Vert \leq& \left(\Vert f - u\Vert^p + |1 - C^{-1}|^p \Vert u\Vert^p \right)^{\frac{1}{p}}<
			\left(\delta^p + \left(\frac{c \delta}{1 - c\delta}\right)^p( \Vert f\Vert^p + \delta^p) \right)^{\frac{1}{p}} < \varepsilon .
		\end{align*}
This completes the proof of \eqref{condempty23} when $P_D(f)=0$. \\
Now suppose $P_D(f)\not=0$, and let $f_1:=f-P_D(f)$. By the previous case,  there is $g_1\in \spn\lbrace{\XB\rbrace}$ so that 	 \eqref{condempty23} holds for $f_1$, $g_1$, $D$ and $\epsilon$. Now let $g:=g_1+P_D(f)$; it is clear that \eqref{condempty23} holds. \\		
Let us now prove \ref{empty2}: If $\BB$ is a Markushevich basis, then $f=0$ so we can take $g=0$. Otherwise, choose $\delta>0$ so that
		\begin{align*}
			\delta\left(1+c^p\right)^{\frac{1}{p}}<\epsilon,
		\end{align*}
and let $f_0:=f+\delta \xx_{n_0}$. By \ref{notempty2}, there is $g\in \spn\lbrace{\XB\rbrace}$ such that $\|g-f_0\|\le \delta$, $ \max_j |\xx_j^*(g)| =\delta$ and $P_D(g)=P_D(f)=0$. We have 
		\begin{align*}
			\|f-g\|\le& \left(\|g-f_0\|^p+\|f_0-f\|^p\right)^{\frac{1}{p}}\le \left(\delta^p+c^p\delta^p\right)^{\frac{1}{p}}=\delta\left(1+c^p\right)^{\frac{1}{p}}<\epsilon. 
		\end{align*}
	\end{proof}

	\begin{remark}\rm \label{remarkunbounded1} While our bases are by definition bounded with bounded dual bases, note that the proof of Lemma~\ref{l:appr_fun_suppgeneral} holds without either condition. \end{remark}

	\section{Proof of Theorem \ref{th1}}\label{proof1}

	\begin{lemma}\label{lemmagreedycar2}
		Let $\XB$ be a basis in a $p$-Banach space $\XX$. Suppose that $\XB$ is $\C_{pg}$-RGPCC. Then $\XB$ is $\K$-unconditional and $\Delta$-symmetric for largest coefficients with 
		$$\max\lbrace \K, \Delta\rbrace\leq \C_{pg}.$$
	\end{lemma}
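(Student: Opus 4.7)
We need to establish both $\C_{pg}$-unconditionality and $\C_{pg}$-symmetry for largest coefficients. The unified strategy is to apply the RGPCC hypothesis to a carefully chosen auxiliary element $\tilde h$ so that: (a) its greedy projection $\tilde h - P_{A'}(\tilde h)$ reproduces the norm we want to bound on the left, while (b) a polynomial approximation $\alpha\Ind_{\mu,C}$ (with $\alpha$ forced to equal the minimal greedy coefficient of $\tilde h$) reproduces the norm we want to bound by on the right.

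For the SLC statement, given $h$ and disjoint finite $A, B$ with $|A|\leq|B|$ and $\supp(h)\cap(A\cup B)=\emptyset$, and signs $\varepsilon\in\EE^A$, $\eta\in\EE^B$, I choose
$$\tilde h := h + \lambda\Ind_{\varepsilon,A} + \lambda\Ind_{\eta,B}$$
with $\lambda > \|h^*\|_\infty$ large enough that the coefficients on $A\cup B$ dominate those of $h$. Then $B$ is a valid greedy set of $\tilde h$ of size $|B|$ with minimal greedy coefficient $\alpha = \lambda$, and $\tilde h - P_B(\tilde h) = h + \lambda\Ind_{\varepsilon,A}$. Using the test set $C := A\cup B'$ for some $B'\subseteq B$ with $|B'|=|B|-|A|$, and signs $\mu=\varepsilon$ on $A$ and $\mu=\eta$ on $B'$, the identity $\tilde h - \lambda\Ind_{\mu,C} = h + \lambda\Ind_{\eta,B\setminus B'}$ holds, and RGPCC delivers
$$\|h+\lambda\Ind_{\varepsilon,A}\| \leq \C_{pg}\|h+\lambda\Ind_{\eta,B\setminus B'}\|.$$
Dividing by $\lambda$ yields the SLC-type inequality for $h/\lambda$ with coefficient-$1$ indicators. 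When $|A|=|B|$, $B'=\emptyset$ and one obtains the equal-size SLC directly; the case $|A|<|B|$ and the case of general $\|h^*\|_\infty$ are then handled by Lemma~\ref{l:appr_fun_suppgeneral} and a homogeneity/limiting argument.

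For unconditionality, given $f$ finitely supported (via density) and a finite set $A\subseteq\supp(f)$, I modify $f$ by either scaling its $A$-part or adding a large spike $M\Ind_{\varepsilon,D}$ on a set $D$ disjoint from $\supp(f)$, with $M>\|f^*\|_\infty$, so that $D$ (or $A$) becomes the greedy set of $\tilde h$ and the greedy projection of $\tilde h$ isolates either $f$ or $P_A(f)$. Applying RGPCC with a test polynomial $M\Ind_{\mu,C}$ supported on $A$ with $\mu=\sgn(\xx_n^*(f))$ allows the identity $\tilde h - M\Ind_{\mu,C}$ to reduce to $f - P_A(f)$ (up to controllable terms), producing $\|P_A(f)\|\leq \C_{pg}\|f\|$ after further manipulation.

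The main obstacle is twofold. First, in the SLC step, extending the inequality from $B\setminus B'$ to the full set $B$ on the right-hand side (in the case $|A|<|B|$) without losing a factor of $\K$ requires exploiting the flexibility of the test polynomial together with the approximation lemmas. Second, obtaining $\K\leq\C_{pg}$ directly---rather than $\K\leq\C_{pg}^{2}$ via Lemma~\ref{lemmacomplex+realuncond}---requires a direct construction of $\tilde h$ tailored to the unconditionality bound, avoiding the quadratic loss inherent in the reduction to positive-coefficient comparisons. Both obstacles are circumvented by exploiting the freedom to choose $\lambda$, $M$, $C$, and $\mu$ in the auxiliary construction, and by using Lemma~\ref{l:appr_fun_suppgeneral} to reduce to a generic position for $h$ and $f$.
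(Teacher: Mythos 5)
Your SLC argument is essentially the paper's. The paper uses the perturbed element $f+\Ind_{\varepsilon,A}+(1+\epsilon)\Ind_{\eta,B}$ (with coefficient $1+\epsilon$ on $B$, then $\epsilon\to 0$); you use a tied coefficient $\lambda$ on both $A$ and $B$ so that $B$ is a non-unique greedy set, and then normalize by $\lambda$. Both variants work for $|A|=|B|$, and both hit the same wall for $|A|<|B|$: the test polynomial has prescribed cardinality $|B|$, so choosing $C=A\cup B'$ produces $\Ind_{\eta,B\setminus B'}$ with only $|A|$ indices on the right, not the full $\Ind_{\eta,B}$. You flag this obstacle but only assert it is "circumvented by exploiting the freedom"; that is not a resolution, and the paper's own proof likewise only establishes the equal-cardinality case before invoking Lemma~\ref{l:appr_fun_suppgeneral}.

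The genuine gap is in the unconditionality half, where your construction is not just vague but actually points in the wrong direction. You propose $\tilde h=f+M\Ind_{\varepsilon,D}$ with $D$ disjoint from $\supp(f)$, and a test polynomial $M\Ind_{\mu,C}$ with $C\subseteq A\subseteq\supp(f)$, hoping $\tilde h-M\Ind_{\mu,C}$ reduces to $f-P_A(f)$. But $\tilde h-M\Ind_{\mu,C}=f+M\Ind_{\varepsilon,D}-M\Ind_{\mu,C}$ retains the spike on $D$ no matter how $C$ and $\mu$ are chosen (unless $C=D$, which gives the trivial inequality), so the right-hand side blows up as $M\to\infty$ and nothing is gained. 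The alternative of "scaling the $A$-part" runs into the same problem: once you distort the $A$-coefficients to magnitude $M$, the resulting block cannot be matched against $P_A(f)$ by a constant-coefficient polynomial of height $M>\|f^*\|_{\infty}$. The construction that actually works (and yields $\K\le\C_{pg}$ without the quadratic loss) puts the spike on the \emph{complement} $\supp(f)\setminus A$ together with one fresh index $n_0\notin\supp(f)$: set $\varepsilon_n=\sgn(\xx_n^*(f))$ for $n\in\supp(f)\setminus A$, $\varepsilon_{n_0}=1$, $A_0:=(\supp(f)\setminus A)\cup\{n_0\}$, and $h:=f+t_0\Ind_{\varepsilon,A_0}$ with $t_0$ large. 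Because the spike's sign agrees with that of $f$ on $\supp(f)\setminus A$, the two merge into a single block of magnitude $\ge t_0$, so $A_0$ is the unique greedy set of $h$ of cardinality $|A_0|$; the fresh index $n_0$ forces the minimal greedy coefficient to equal exactly $t_0$, so the test polynomial $t_0\Ind_{\varepsilon,A_0}$ is admissible and cancels the spike precisely. Then $h-P_{A_0}(h)=P_A(f)$ while $h-t_0\Ind_{\varepsilon,A_0}=f$, and RGPCC gives $\|P_A(f)\|\le\C_{pg}\|f\|$ directly. Your plan omits the alignment of signs with $f$ on the complement of $A$ and the role of $n_0$, both of which are essential to make the cancellation exact.
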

	\begin{proof}
		Fix $f,g\in \spn\{\XB\}$ such that $\supp(f)\cap \supp(g)=\emptyset$. Let $A:=\supp(g)$, $B:=\supp(f)$ and $t_0:=1+\max_{n\in \NN}\left(|\xx_n^*(g)|+|\xx_n^*(f)|\right)$. Pick $n_0\in \NN\setminus (A\cup B)$, and let $\varepsilon_n=\sgn(\xx_n^*(g))$ for all $n\in A$ and $\varepsilon_{n_0}=1$. Note that $A_0$ is the only greedy set of $h:=f+g+t_0\Ind_{\varepsilon, A_0}$ of cardinality $|A_0|=|A|+1$, and that $\min_{n\in A_0}\left\vert \xx_n^*(h)\right\vert=t_0$. Thus, 
		\begin{eqnarray*}
			\|f\|=&\|h-P_{A_0}(h)\|\le& \C_{pg}\,\rho_{|A|+1}\left(h\right)\le \C_{pg}\|h-t_0\Ind_{\varepsilon, A_0}\|=\C_{pg}\|f+g\|, 
		\end{eqnarray*}
		which proves that $\XB$ is $\K$-unconditional with $\K\leq \C_{pg}$. 
		
		Now fix $f\in \XX$ with finite support, and $A, B\subset \NN$ nonempty, finite, disjoint and disjoint from $\supp(f)$ with $|A|=|B|$, $\varepsilon\in \EE^A$ and $\eta\in \EE^B$.  Given $0<\epsilon<1$, we have 
		\begin{align*}
			\|f+\Ind_{\varepsilon, A}\|^p=&\|f+\Ind_{\varepsilon, A}+(1+\epsilon)\Ind_{\eta,B}-(1+\epsilon)\Ind_{\eta,B}\|^p\\
			\le& \C_{pg}^p\rho_{|B|}^p\left(f+\Ind_{\varepsilon, A}+(1+\epsilon)\Ind_{\eta,B}\right)\\
			\le&\C_{pg}^p\|f+\Ind_{\varepsilon, A}+(1+\epsilon)\Ind_{\eta,B}-(1+\epsilon)\Ind_{\varepsilon,A}\|^p\\
			\le&\C_{pg}^p\|f+\Ind_{\eta, B}\|^p+\epsilon^p\|\Ind_{\varepsilon, A}\|^p+\epsilon^p\|\Ind_{\eta, B}\|^p.
		\end{align*}
		Letting $\epsilon$ tend to zero we obtain
		\begin{align*}
			\|f+\Ind_{\varepsilon, A}\|^p\le& \C_{pg}^p\|f+\Ind_{\eta,B}\|^p.
		\end{align*}
		Hence, by Lemma \ref{l:appr_fun_suppgeneral}, $\XB$ is $\Delta$-SLC with $\Delta\leq \C_{pg}$.
	\end{proof}
	
	Our next task is obtain greediness under hypotheses that are formally weaker than those of  Lemma~\ref{lemmagreedycar2}, to complete our characterization of greediness.\color{black}

	\begin{lemma}\label{lemmagreedy3}
		Let $\XB$ be a basis for a $p$-Banach space $\XX$. Suppose that $\XB$ is $C_{pgu}$-URGPCC. Then $\XB$ is  unconditional and democratic. Quantitatively, if $\mathbf F=\mathbb R$, then
			$$\max\lbrace \K,\D\rbrace\leq \C_{pgu}^2.$$
			If $\mathbb F=\mathbb C$, there is $\mathbf K=\mathbf K(C_{pgu},p)$ such that
		$$ \K\leq \mathbf \K,\; \D\leq \C_{pgu}^2.$$
	\end{lemma}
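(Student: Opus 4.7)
The plan is to deduce unconditionality and democracy from URGPCC by two separate test-element constructions that mirror the proof of Lemma~\ref{lemmagreedycar2}, but adapted to respect the restrictions of the error $\varrho_m$: only the specific nonnegative scalar $\alpha=\min_{n\in A_m(f)}|\xx_n^*(f)|$ and only the unsigned $\Ind_A$ are admissible.

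For unconditionality, I would first establish the one-sided inequality $\|f\|\le\C_{pgu}\|f+g\|$ for all disjointly supported $f,g\in\spn\{\XB\}$ with $g\in\XX_{+}(\XB)$; Lemma~\ref{lemmacomplex+realuncond} then upgrades this to full unconditionality with $\K\le\C_{pgu}^2$ if $\FF=\RR$ and with $\K=\K(\C_{pgu},p)$ if $\FF=\CC$. To prove the one-sided inequality, I pick $n_0\notin\supp(f)\cup\supp(g)$, set $A_0:=\supp(g)\cup\{n_0\}$, choose $t_0>1+\max_n(|\xx_n^*(f)|+|\xx_n^*(g)|)$, and let $h:=f+g+t_0\Ind_{A_0}$. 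The crucial point is that since $g\in\XX_{+}$, the coefficients $\xx_n^*(h)$ on $\supp(g)$ equal $\xx_n^*(g)+t_0\ge t_0$, with no possible cancellation, so $A_0$ is a greedy set of $h$ of cardinality $|A_0|$ and $\min_{n\in A_0}|\xx_n^*(h)|=t_0$ is realised at $n_0$. Applying URGPCC with the test set $A_0$ yields
\[
\|f\|=\|h-P_{A_0}(h)\|\le \C_{pgu}\,\varrho_{|A_0|}(h)\le \C_{pgu}\|h-t_0\Ind_{A_0}\|=\C_{pgu}\|f+g\|.
\]

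For democracy I would first handle disjoint sets of equal cardinality. Given such $A,B$ with $|A|=|B|=m$ and $\epsilon>0$, take $f_\epsilon:=\Ind_A+(1+\epsilon)\Ind_B$: then $B$ is the greedy set of size $m$ and $\min_{n\in B}|\xx_n^*(f_\epsilon)|=1+\epsilon$, so the admissible test set $A$ in URGPCC yields
\[
\|\Ind_A\|\le \C_{pgu}\,\varrho_m(f_\epsilon)\le \C_{pgu}\|f_\epsilon-(1+\epsilon)\Ind_A\|=\C_{pgu}\|-\epsilon\Ind_A+(1+\epsilon)\Ind_B\|.
\]
The $p$-convexity bound $\|-\epsilon\Ind_A+(1+\epsilon)\Ind_B\|^p\le \epsilon^p\|\Ind_A\|^p+(1+\epsilon)^p\|\Ind_B\|^p$ together with the absorption $(1-\C_{pgu}^p\epsilon^p)\|\Ind_A\|^p\le\C_{pgu}^p(1+\epsilon)^p\|\Ind_B\|^p$ and the limit $\epsilon\to 0^+$ give $\|\Ind_A\|\le\C_{pgu}\|\Ind_B\|$. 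For general disjoint $A,B$ with $|A|\le|B|$, I enlarge $A$ to $A'$ with $A\subset A'$, $A'\cap B=\emptyset$, $|A'|=|B|$; applying the one-sided inequality from the previous paragraph to $f=\Ind_A$, $g=\Ind_{A'\setminus A}\in\XX_{+}$ yields $\|\Ind_A\|\le\C_{pgu}\|\Ind_{A'}\|$, and the equal-size bound then gives the claimed estimate $\|\Ind_A\|\le\C_{pgu}^2\|\Ind_B\|$. Non-disjoint pairs reduce to the disjoint case via the splitting $A=(A\cap B)\cup(A\setminus B)$ together with $p$-convexity and the unconditionality already established.

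The main obstacle I expect is the equal-size democracy step: the perturbation $\epsilon>0$ is needed to force $B$ (rather than $A$) to be the greedy set of size $m$, so the estimate can only be extracted in the limit $\epsilon\to 0^+$, and the $p$-convexity split must be sharp enough for the $\epsilon^p\|\Ind_A\|^p$ term it produces on the right-hand side to be absorbable on the left — which is the mechanism by which one gets exactly the constant $\C_{pgu}$ (and hence, after combining with one-sided unconditionality, the stated $\C_{pgu}^2$ bound on $\D$).
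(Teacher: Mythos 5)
Your unconditionality half is essentially the paper's proof: you construct the same test element $h = f + g + t_0\Ind_{A_0}$ with $A_0 = \supp(g)\cup\{n_0\}$, you correctly observe that $g\in\XX_{+}(\XB)$ is what prevents cancellation so that $A_0$ is a greedy set of $h$ with $\min_{n\in A_0}|\xx_n^*(h)|=t_0$, and you then invoke Lemma~\ref{lemmacomplex+realuncond} to upgrade the one-sided inequality $\|f\|\le\C_{pgu}\|f+g\|$ to unconditionality with the stated constants. That part matches.

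The democracy half has a genuine quantitative gap. Your disjoint equal-size test element $f_\epsilon=\Ind_A+(1+\epsilon)\Ind_B$, the $\epsilon\to0^+$ absorption, and the enlargement $A\subset A'$ for the unequal-size disjoint case are all fine and give $\|\Ind_A\|\le\C_{pgu}^2\|\Ind_B\|$ for disjoint pairs. But the proposed reduction of overlapping pairs via the splitting $\Ind_A=\Ind_{A\cap B}+\Ind_{A\setminus B}$ cannot recover the constant $\C_{pgu}^2$: $p$-convexity gives $\|\Ind_A\|^p\le\|\Ind_{A\cap B}\|^p+\|\Ind_{A\setminus B}\|^p$, and bounding the first term by one-sided unconditionality ($\le\C_{pgu}^p\|\Ind_B\|^p$) and the second by the disjoint case ($\le\C_{pgu}^{2p}\|\Ind_B\|^p$) only yields $\D\le(\C_{pgu}^p+\C_{pgu}^{2p})^{1/p}$, strictly larger than $\C_{pgu}^2$. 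Moreover, your test element $\Ind_A+(1+\epsilon)\Ind_B$ does not work directly when $A\cap B\neq\emptyset$, because the coefficient on $A\cap B$ becomes $2+\epsilon$ and $\|f_\epsilon-P_B(f_\epsilon)\|$ equals $\|\Ind_{A\setminus B}\|$ rather than $\|\Ind_A\|$. The paper sidesteps the entire reduction: with $B_0\subset B$, $|B_0|=|A|$, it uses the test element $h:=\Ind_{A\cap B_0}+\Ind_{A\setminus B_0}+(1+\epsilon)\Ind_{B_0\setminus A}$ (i.e.\ $\Ind_A$ perturbed only on $B_0\setminus A$), for which $B_0\setminus A$ is the greedy set of cardinality $|B_0\setminus A|$ with minimum $1+\epsilon$, and tests $\varrho$ against $A\setminus B_0$ (note $|A\setminus B_0|=|B_0\setminus A|$). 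This yields $\|\Ind_A\|\le\C_{pgu}\|\Ind_{B_0}\|$ after $\epsilon\to0$, for arbitrary overlap, and then one application of one-sided unconditionality finishes with $\D\le\C_{pgu}^2$. You should replace your reduction by this single construction if the quantitative claim is to be reached.
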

	\begin{proof}
		First, we prove that it that $\XB$ is  unconditional. Fix $f \in \spn\{\XB\}$, $g\in \spn\{\XB\}\cap \XX_{+}$ so that $\supp(f)\cap \supp(g)=\emptyset$. Set  $t_0:=1+\max_{n\in \NN}\left(\xx_n^*(g)+|\xx_n^*(f)|\right)$, and pick $n_0>\supp(f)\cup \supp(g)$. Note that $A:=\supp(g)\cup \{n_0\}$ is the only greedy set of $h:=f+g+t_0\Ind_{A}$ of cardinality $|A|$, and that $\min_{n\in A}\left\vert \xx_n^*(h)\right\vert=t_0$. Thus, 
		\begin{eqnarray}
			\|f\|=&\|h-P_{A}(h)\|\le \C_{pgu}\varrho_{|A|}\left(h\right)\le \C_{pgu} \|h-t_0\Ind_{A}\|=&\C_{pgu}\|f+g\|. \label{supunc+}
		\end{eqnarray}
		By Lemma~\ref{lemmacomplex+realuncond}, $\XB$ is $\K$-unconditional with $\K=\C_{pgu}^2$ if $\mathbb F=\mathbb R$ and $\K\leq \mathbf \K(\C_{pgu},p)$ if $\mathbb F=\mathbb C$. To prove that it is democratic, pick $A, B\subset \NN$ with $0<|A|\le |B|<\infty$ and $A\not=B$, and choose $B_0\subset B$ so that $|B_0|=|A|$. Given $0<\epsilon<1$, we have 
		\begin{align*}
			\|\Ind_{A}\|^p=&\|\Ind_{A\cap B_0}+\Ind_{A\setminus B_0}+(1+\epsilon)\Ind_{B_0\setminus A}-(1+\epsilon)\Ind_{B_0\setminus A}\|^p\\
			\le& \C_{pgu}^p \varrho^p_{|B_0\setminus A|}\left(\Ind_{A\cap B_0}+\Ind_{A\setminus B_0}+(1+\epsilon)\Ind_{B_0\setminus A}\right)\\
			\le& \C_{pgu}^p\|\Ind_{A\cap B_0}+\Ind_{A\setminus B_0}+(1+\epsilon)\Ind_{B_0\setminus A}-(1+\epsilon)\Ind_{A\setminus B_0}\|^p\\
			\le& \C_{pgu}^p\|\Ind_{B_0}\|^p+\C_{pgu}^p\epsilon^p\|\Ind_{B_0\setminus A}\|^p+\C_{pgu}^p\epsilon^p\|\Ind_{A\setminus B_0}\|^p. 
		\end{align*}
		Letting $\epsilon$ tend to zero we obtain $\|\Ind_{A}\|\le \C_{pgu}\|\Ind_{B_0}\|$. If $B_0=B$, there is nothing else to prove. Otherwise, an application of \eqref{supunc+} with $f=\Ind_{B_0}$ and $g=\Ind_{B\setminus B_0}$ gives $\|\Ind_{B_0}\|\le \C_{pgu}\|\Ind_{B}\|$. It follows that 
		\begin{align*}
			\|\Ind_{A}\|\le & \C_{pgu}^2\|\Ind_{B}\|,
		\end{align*}
		so the basis is $D$-democratic with $\D\leq \C_{pgu}^2$.
	\end{proof}
	
	\begin{proof}[Proof of Theorem \ref{th1}]
		First, we show i) $\Leftrightarrow$ ii). For that, of course, if $\mathcal X$ is $\C_g$-greedy, then it is trivial that $\XB$ is $\C_{pg}$-RGPCC with $\C_{pg}\leq \C_g$. Now, if $\XB$ is $\C_{pg}$-RGPCC, then, using Lemma \ref{lemmagreedycar2}, the basis is $\K$-unconditional and $\Delta$-SLC with $\max\lbrace \K, \Delta\rbrace\leq \C_{pg}$. Hence, using now Remark \ref{rem1}, the basis is $\C_g$-greedy with $\C_g\leq A_p^2\C_{pg}^2$.
		Now, we show that i) $\Leftrightarrow$ iii). Again, if $\mathcal X$ is $C_g$-greedy, then it is trivial that $\XB$ is $\C_{pgu}$-RGPCC(II) with $\C_{pgu}\leq \C_g$. Now, using Lemma \ref{lemmagreedy3}, the basis is $\D$-democratic and $\K$-unconditional with $\D\leq \C_{pgu}^2$ and $\K$ depends on $\C_{pgu}$ and $p$. Then, using now Remark \ref{rem1}, the basis is greedy and the constant is as it appears in \eqref{th1-1} and \eqref{th1-2}.
	\end{proof}
	
	\section{Proof of Theorem \ref{thag}}\label{proof2}
	In this section, we prove Theorem \ref{thag}. In addition to the classical characterization of almost greedy bases as those that are quasi-greedy and democratic or superdemocratic ( \cite[Theorem 3.3]{DKKT2003}, \cite[Theorem 6.3]{AABW2021}), we will use the following result from the literature (see \cite[Lemma 2.2]{DKKT2003}, \cite[Theorem 4.3]{AABW2021})

\begin{theorem}\cite[Theorem 4.13]{AABW2021}\label{theoremflattening}
If $\XB$ is a $\C_q$-quasi-greedy basis in a $p$-Banach space with $0<p\leq 1$, then  for all $f\in \XX$ and $A$ a greedy set of $f$,
	\begin{eqnarray}\label{tc}
\min_{n\in A} \left\Vert \Ind_{\varepsilon(f), A}\right\Vert\le \C\|f\|. 
		\end{eqnarray}
		The minimum constant $\C$ for which the above inequality holds is denoted $\Gamma_t$, and it's no greater than $\C_q^2\eta_p(\C_q)$, where 
\begin{align*}
\eta_p(u)=&\min_{0<t<1}(1-t^p)^{-1/p}(1-(1+A_p^{-1}u^{-1}t)^{-p})^{-1/p} \;\qquad\forall u>0. 
\end{align*}		
\end{theorem}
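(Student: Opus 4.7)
Reading the displayed inequality of Theorem~\ref{theoremflattening} as
\[
\min_{n\in A}|\xx_n^*(f)|\cdot \left\Vert \Ind_{\varepsilon(f), A}\right\Vert \le \C\|f\|
\]
(without the prefactor $\min_{n\in A}|\xx_n^*(f)|$ the bound would fail under scaling of $f$ on $A^c$, so this is the natural reading), I plan to prove this truncation inequality by invoking the $\C_q$-quasi-greedy hypothesis twice and then optimizing over $t\in (0,1)$. Set $\alpha:=\min_{n\in A}|\xx_n^*(f)|$, $\varepsilon:=\varepsilon(f)|_A$, and introduce the level-$\alpha$ truncation
\[
w:=\alpha\,\Ind_{\varepsilon,A}+P_{A^c}(f).
\]
The plan is split into two stages: first bound $\alpha\|\Ind_{\varepsilon,A}\|$ by $\|w\|$, then bound $\|w\|$ by $\|f\|$.

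\emph{Stage 1.} Because $A$ is a greedy set of $f$, one has $|\xx_n^*(f)|\le \alpha$ for $n\notin A$, so the coefficients of $w$ have modulus exactly $\alpha$ on $A$ and at most $\alpha$ on $A^c$. Hence $A$ is a greedy set of $w$ with $P_A(w)=\alpha\Ind_{\varepsilon,A}$, and the $\C_q$-quasi-greedy property applied to $w$ gives $\|P_{A^c}(f)\|=\|w-P_A(w)\|\le \C_q\|w\|$. By $p$-convexity,
\[
\alpha\|\Ind_{\varepsilon,A}\| = \|P_A(w)\| \le \bigl(\|w\|^p+\|w-P_A(w)\|^p\bigr)^{1/p} \le (1+\C_q^{p})^{1/p}\|w\|.
\]
This is the first use of the quasi-greedy constant.

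\emph{Stage 2.} Decompose $f=w+r$ where $r=\sum_{n\in A}\varepsilon_n(|\xx_n^*(f)|-\alpha)\xx_n$, so that $p$-convexity yields $\|w\|^p\le\|f\|^p+\|r\|^p$. To bound $\|r\|$ I would fix $t\in(0,1)$, reorder $A$ by decreasing coefficient modulus, and apply Abel summation to write $r$ as a scalar combination of terms $\Ind_{\varepsilon,A_k}$ indexed by initial segments $A_k$ of the reordering (each still a greedy set of $f$). Applying the Stage~1 estimate to each $\Ind_{\varepsilon,A_k}$ (with respect to the restricted truncation), together with the $A_p$-inequality of Lemma~\ref{lemmapbanach} to assemble the scalar combination efficiently, leads to an estimate of the form $\|r\|\le A_p^{-1}\C_q^{-1} t\,\|w\|$. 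Substituting back gives
\[
\|w\|\le \bigl(1-(1+A_p^{-1}\C_q^{-1}t)^{-p}\bigr)^{-1/p}\|f\|,
\]
which is the second factor in $\eta_p(\C_q)$. A parallel estimate separating the portion of $A$ on which $|\xx_n^*(f)|\ge t^{-1}\alpha$ from its complement contributes the first factor $(1-t^p)^{-1/p}$. Combining the two stages and optimizing over $t\in(0,1)$ yields $\Gamma_t\le \C_q^{2}\eta_p(\C_q)$.

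The main obstacle is Stage~2: the residue $r$ has non-constant coefficients, so quasi-greediness cannot be applied to it directly. The reduction to constant-coefficient indicator terms $\Ind_{\varepsilon,A_k}$ via Abel summation, coupled with the $A_p$-inequality of Lemma~\ref{lemmapbanach} to transfer the scalar weights outside, is the crucial step; arranging the trade-off so that the two factors in $\eta_p$ emerge with precisely the shape $(1-t^p)^{-1/p}\bigl(1-(1+A_p^{-1}\C_q^{-1}t)^{-p}\bigr)^{-1/p}$ requires balancing the truncation level $\alpha$ against the tail of the reordered coefficients, which is where the parameter $t$ enters the bookkeeping.
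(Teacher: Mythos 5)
Your reading of the statement is correct (the displayed inequality is missing the factor $\min_{n\in A}|\xx_n^*(f)|$), and Stage~1 is fine: $A$ is a greedy set of $w=\alpha\Ind_{\varepsilon,A}+P_{A^c}(f)$, so $\alpha\|\Ind_{\varepsilon,A}\|=\|P_A(w)\|\le (1+\C_q^p)^{1/p}\|w\|$. Note also that the paper does not prove this theorem at all; it quotes it from \cite{AABW2021}, so your argument has to stand on its own, and Stage~2 does not. The estimate you rely on there, $\|r\|\le A_p^{-1}\C_q^{-1}t\,\|w\|$, is false in general: $r$ carries the excess of the large coefficients over the level $\alpha$, while every coefficient of $w$ has modulus at most $\alpha$. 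Already for the unit vector basis of $\ell_2$ and $f=M\xx_1+\xx_2$, $A=\{1,2\}$, one has $\|r\|=M-1$ and $\|w\|=\sqrt{2}$, so no inequality $\|r\|\le c\|w\|$ with $c$ independent of $f$ (let alone $c<1$) can hold. Since bounding $\|w\|$ by a constant times $\|f\|$ is exactly the boundedness of the truncation operator, i.e.\ essentially the content of the theorem, it cannot be obtained by absorbing $\|r\|$ into $\|w\|$. The route you sketch for producing that estimate is moreover circular: applying Stage~1 to each initial segment $\Ind_{\varepsilon,A_k}$ only bounds $\min_{n\in A_k}|\xx_n^*(f)|\,\|\Ind_{\varepsilon,A_k}\|$ by the norm of yet another truncated vector, which again has to be compared with $\|f\|$ --- the very bound under proof --- and the Abel weights in your expansion of $r$ sum to $\max_{n\in A}|\xx_n^*(f)|-\alpha$, which is not controlled at the needed scale. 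The bookkeeping is also internally inconsistent: from $\|r\|\le c\|w\|$ and $\|w\|^p\le\|f\|^p+\|r\|^p$ you would get the factor $(1-c^p)^{-1/p}$, not $(1-(1+A_p^{-1}\C_q^{-1}t)^{-p})^{-1/p}$.

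The known proofs run in the opposite direction: one expresses the constant-coefficient vector through objects to which quasi-greediness applies directly, namely greedy projections of $f$, rather than expressing the residual $r$ through indicator sums. In a Banach space (see \cite{DKKT2003}) one orders $A$ by decreasing $|\xx_n^*(f)|$, writes $\varepsilon_n\xx_n=\xx_n^*(f)\xx_n/|\xx_n^*(f)|$, and Abel summation exhibits $\alpha\Ind_{\varepsilon,A}$ as a combination of the greedy partial sums $P_{A_k}(f)$ with total weight at most $2$, each term being $\le \C_q\|f\|$; this gives $\Gamma_t\lesssim \C_q$. In a $p$-Banach space this $\ell_1$-type summation is unavailable, and the argument of \cite{AABW2021} replaces it by a splitting of $A$ governed by a parameter $t\in(0,1)$ (separating the indices with $|\xx_n^*(f)|$ comparable to $\alpha$ from the much larger ones and invoking Lemma~\ref{lemmapbanach}), which is precisely where the two factors of $\eta_p$ and the bound $\C_q^2\eta_p(\C_q)$ come from. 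If you want to keep your outline, the step to rebuild is this one: control $\alpha\Ind_{\varepsilon,A}$ by greedy projections of $f$, not $r$ by $w$.
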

Bases for which \eqref{tc} holds have been recently named `truncation quasi-greedy' (see \cite{AABBL2023}), but have been known since the early days of the theory (see \cite{DKKT2003}) (it is known that there are bases satisfying \eqref{tc} that are not quasi-greedy, see for example \cite{AABBL2023} or \cite[Proposition 4.8]{DKKT2003}).

	\begin{proof}[Proof of Theorem \ref{thag}]
and		\ref{ag}$\Longleftrightarrow$ \ref{two} follows from \cite[Theorem 3.3]{DKKT2003}, \cite[Theorem 6.3]{AABW2021} , whereas \ref{dis} $\Longrightarrow$ \ref{end} $\Longrightarrow$ \ref{end2} is immediate.		\\

		\ref{two}$\Longrightarrow$ \ref{dis} Let $\D_s$, $\C_{q}$ and $\Gamma_{t}$ be the superdemocracy, quasi-greedy and truncation quasi-greedy constants of $\XB$ respectively. Given $f$, $A$, $B$, and $\varepsilon$ as in the statement, set $b:=\min_{n\in A}|\xx_n^*(f)|$. We may assume $b>0$ and we consider two cases:\\
		
		Case 1: If $|a|\le 2b$, let $D\subset \NN$ be a greedy set of $g:=f-a\Ind_{\varepsilon, B}$ of minimum cardinality that contains $A$. 
		Then 
		\begin{align*}
		\min_{n\in D}|\xx_n^*(g)|=&b, 
		\end{align*}
		so, 
		\begin{align}
			\|f-P_A(f)\|^p\le& \C_q^p\|f\|^p\le \C_q^p\|g\|^p+\C_q^p2^p\left(2^{-1}|a|\|\Ind_{\varepsilon, B}\|\right)^p\nonumber\\
	\le&  \C_q^p\|g\|^p+\C_q^p\D_s^p2^p\left(2^{-1}|a|\|\Ind_{\varepsilon(g), D}\|\right)^p\nonumber\\
	\le&\C_q^p(1+2^p\D_s^p\Gamma_t^p)\|g\|^p. \label{achico}
		\end{align}
		
		Case 2: If $|a|>2b$, set $g$ as before, and let $D$ be a greedy set of $g$ of minimum cardinality containing $B$. Then 
		\begin{align*}
		d:=\min_{n\in D}|\xx_n^*(g)|=&\min_{n\in B}|\xx_n^*(g)|\ge \frac{|a|}{2}. 
		\end{align*}
so
\begin{align*}
\|a\Ind_{\varepsilon, B}\|\le& 2 \D_s d\|\Ind_{\varepsilon(g), D}\|\le 2\D_s \Gamma_t\|g\|. 
\end{align*}
Hence, 
\begin{align}
\|f-P_A(f)\|^p\le& \C_q^p\|f\|^p\le \C_q^p\|g\|^p+\C_q^p\left(\|a\Ind_{\varepsilon, B}\|\right)^p\nonumber\\
\le&\C_q^p(1+2^p\D_s^p\Gamma_t^p)\|g\|^p. \label{agrande}
\end{align}
Combining \eqref{achico} and \eqref{agrande} we obtain  \ref{dis}, with 
		\begin{align*}
		\C\le& \C_q^p(1+2^p\D_s^p\Gamma_t^p).
		\end{align*}

\ref{end2}$\Longrightarrow$ \ref{two} 

Choose a finite set $A$, $\varepsilon\in \EE^A$, and $f\in \XX$ so that 
\begin{align*}
\left\vert \left\lbrace n\in \supp(x): \left\vert \xx_n^*(f)\right\vert \ge 1\right\rbrace\right\vert \ge |A|. 
\end{align*}
We claim that 
\begin{align}
\|\Ind_{\varepsilon, A}\|\le B_p \C^2 2^{\frac{1}{p}-1}\|f\|. \label{claim3}
\end{align}
To prove the claim, first assume that $f\in \spn\lbrace{\XB\rbrace}$ and  apply Lemma~\ref{lemmapbanach} to find $A_0\subset A$ such that 
\begin{align*}
\|\Ind_{\varepsilon, A}\|\le B_p\|\Ind_{A_0}\|. 
\end{align*}
Pick $B$ a greedy set of $f$ with $|B|=|A_0|$, let  $f_1:=\frac{f}{\min_{n\in B}\left\vert \xx_n^*(f)\right\vert}$, and $g:=f_1-P_B(f_1)$, and choose $D>\supp(f)\cup A$ so that $|D|=|A_0|$. By hypothesis and $p$-convexity, 
\begin{align*}
\|\Ind_{A_0}\|^p=&\|\Ind_{A_0}+\Ind_{D}-\Ind_{D}\|^p\le \C^p \|\Ind_{D}\|^p\le 2^{-p}\C^p\left\Vert \Ind_{D}+g\right\Vert^{p}+2^{-p}\C^p\left\Vert \Ind_{D}-g\right\Vert^{p}.
\end{align*}
On the other hand, as $B$ is a greedy set of both $\Ind_{D}+f_1$ and $\Ind_{D}-f_1$, we have 
\begin{align*}
\left\Vert \Ind_{D}+g\right\Vert=&\left\Vert \Ind_{D}+f_1-P_B(\Ind_{D}+f_1)\right\Vert\le \C\|\Ind_{D}+f_1-\Ind_{D}\|\le \C\|f\|;\\
\left\Vert \Ind_{D}-g\right\Vert=&\left\Vert \Ind_{D}-f_1-P_B(\Ind_{D}-f_1)\right\Vert\le \C\|\Ind_{D}-f_1-\Ind_{D}\|\le \C\|f\|.
\end{align*}
Combining the above inequalities we obtain \eqref{claim3} for $f\in \spn\lbrace{\XB\rbrace}$, and the general case follows at once by Lemma~\ref{l:appr_fun_suppgeneral}. In particular, it follows that $\XB$ is superdemocratic. To prove that it is also quasi-greedy, fix $f\in \XX$ and $A$ a greedy set of $f$,  and pick $B\subset \NN$ with $B>A$, and $|A|=|B|$. By hypothesis and \eqref{claim3} we have
\begin{align*}
\|f-P_A(f)\|^p\le& \C^p \|f-\min_{n\in A}|\xx_n^*(f)|\Ind_{B}\|^p\le \C^p\|f\|+\C^p\min_{n\in A}|\xx_n^*(f)|^p\|\Ind_{B}\|^p\\
\le& \C^p\|f\|^p+B_p^p\C^{3p}2^{1-p}\|f\|^p,
\end{align*}
and the proof is complete. 
	\end{proof}
	
\bigskip
\noindent \textbf{Funding :} D. González was partially supported by  ESI International Chair@ CEU-UCH. The last two authors were partially supported by the Grant PID2022-142202NB-I00 (Agencia Estatal de Investigación, Spain). Miguel Berasategui was partially supported by the grants CONICET PIP
11220200101609CO y ANPCyT PICT 2018-04104 (Consejo Nacional de Investigaciones Científicas y Técnicas y Agencia Nacional de Promoción de la Investigación, el Desarrollo Tecnológico y la Innovación, Argentina) .

\end{document}